 \pgfplotsset{compat=newest,legend style={font=\scriptsize,row sep=-0.05cm,/tikz/every odd column/.append style={column sep=0.01cm}}} 
\crefname{hypothesis}{Hypothesis}{Hypotheses}
\crefname{remark}{Remark}{Remarks}
\crefname{experiment}{Experiment}{Experiments}
\Crefname{ALC@unique}{line}{lines}
\DeclareMathOperator{\Range}{Range}
\newcommand{\norm}[1]{\lVert#1\rVert}
\newcommand{\R}{\mathbb R}
\newcommand{\N}{\mathbb N}
\newcommand{\bb}{\mathbf b}
\newcommand{\bff}{\mathbf f}
\newcommand{\bp}{\mathbf p}
\newcommand{\bss}{\mathbf s}
\newcommand{\bx}{\mathbf x}
\newcommand{\by}{\mathbf y}
\newcommand{\bz}{\mathbf z}
\newcommand{\bw}{\mathbf w}
\newcommand{\wh}{\widehat}
\DeclareMathOperator*{\argmax}{\arg\!\max}
\DeclareMathOperator{\diag}{\sf diag}
\definecolor{mycolor1}{rgb}{0.00000,1.00000,1.00000}%
\definecolor{mycolor2}{rgb}{0.8500, 0.3250, 0.0980}%
\definecolor{mycolor3}{rgb}{1.00000,0.00000,1.00000}%
\newcommand{\pg}[1]{{\color{black}#1}}
\newcommand{\smtxa}[2]{
{\mbox{\scriptsize
$\left[\!\!
\begin{array}{#1}
#2
\end{array} \!\! \right]$}}}
\newcommand{\mtxa}[2]{
\left[
\begin{array}{#1}
#2
\end{array}
\right]}
\title{A Restricted SVD type CUR Decomposition \\ for Matrix Triplets\thanks{Version \today.\funding{This work has received funding from the European Union's Horizon 2020 research and innovation programme under the Marie Sklodowska-Curie grant agreement No 812912.}}}
\author{Perfect~Y.~Gidisu\thanks{Department of Mathematics and Computer Science, TU Eindhoven, The Netherlands, (\email{p.gidisu@tue.nl}, \email{m.e.hochstenbach@tue.nl}).} \and Michiel~E.~Hochstenbach\footnotemark[2]}
\begin{document}
\maketitle
\begin{tcbverbatimwrite}{tmp_\jobname_abstract.tex}
\begin{abstract}
We present a new restricted SVD-based CUR (RSVD-CUR) factorization for matrix triplets $(A, B, G)$ that aims to extract meaningful information by providing a low-rank approximation of the three matrices using a subset of their rows and columns. The proposed method utilizes the discrete empirical interpolation method (DEIM) to select the subset of rows and columns from the orthogonal and nonsingular matrices obtained through a restricted singular value decomposition of the matrix triplet. We explore the relationships between a DEIM type RSVD-CUR factorization, a DEIM type CUR factorization, and a DEIM type generalized CUR decomposition, and provide an error analysis that establishes the accuracy of the RSVD-CUR decomposition within a factor of the approximation error of the restricted singular value decomposition of the given matrices.

The RSVD-CUR factorization can be used in applications that require approximating one data matrix relative to two other given matrices. We discuss two such applications, namely multi-view dimension reduction and data perturbation problems where a correlated noise matrix is added to the input data matrix. Our numerical experiments demonstrate the advantages of the proposed method over the standard CUR approximation in these scenarios.
\end{abstract}

\begin{keywords}
Restricted SVD, low-rank approximation, CUR decomposition, interpolative decomposition, DEIM, subset selection, canonical correlation analysis, multi-view learning, nonwhite noise, colored noise, structured perturbation
\end{keywords}

\begin{MSCcodes}
65F55, 15A23, 15A18, 15A21, 65F15, 68W25
\end{MSCcodes}
\end{tcbverbatimwrite}
\input{tmp_\jobname_abstract.tex}

\section{Introduction}\label{sec: intro} Identifying the underlying structure of a data matrix and extracting meaningful information is a crucial problem in data analysis. Low-rank matrix approximation is one of the means to achieve this. CUR factorizations and interpolative decompositions (IDs) are appealing techniques for low-rank matrix approximations, which approximate a data matrix in terms of a subset of its columns and rows. These types of low-rank matrix factorizations have several advantages over the ones based on orthonormal bases because they inherit properties such as sparsity, nonnegativity, and interpretability of the original matrix. Various proposed algorithms in the literature seek to find a representative subset of rows and columns by exploiting the properties of the singular vectors \cite{Mahoney, Sorensen} or using a pivoted QR factorization \cite{Voronin}. Given a matrix $A\in \R^{m\times n}$ and a target rank $k$, a rank-$k$ CUR factorization approximates $A$ as (in line with \cite{Str19}, we will use the letter $M$ instead of $U$ for the middle matrix)
\begin{equation}\label{eq: cur}
\begin{array}{ccccc}
 A&\approx & C & M & R~, \\[-0.5mm]
 {\scriptstyle m\times n} & & {\scriptstyle m\times k} &{\scriptstyle k\times k}& {\scriptstyle k\times n}~
\end{array}
\end{equation}
where $C$ and $R$ (both of rank $k\le\min(m, n)$) consist of $k$ columns and rows of $A$, respectively. The middle matrix $M$ (of rank $k$) can be computed as $C^{+}\!AR^{+}$ ($C^+$ and $R^+$ denote the pseudoinverse of $C$ and $R$, respectively); in \cite{Stewart}, Stewart shows how this computation minimizes $\norm{A-CMR}$ for specified row and column indices. Here, $\norm{\cdot}$ denotes the spectral norm. To construct the factors $C$ and $R$, one can apply the discrete empirical interpolation method (DEIM) proposed in \cite{Chaturantabut} or any other appropriate index selection method (see, e.g., \cite{goreinov1997pseudo,Mahoney,Drmac} ) to the leading $k$ right and left singular vectors of $A$, to obtain the appropriate column and row indices.

In this paper, we generalize a DEIM type CUR \cite{Sorensen} method to develop a new coupled CUR factorization of a matrix triplet $(A, B, G)$ of compatible dimensions, based on the restricted singular value decomposition (RSVD). We call this factorization an RSVD based CUR (RSVD-CUR) factorization. We stress that this RSVD does not stand for randomized SVD (see, e.g., \cite{halko2011finding}). 

Over the decades, several generalizations of the singular value decomposition (SVD) corresponding to the product or quotient of two to three matrices have been proposed. The most commonly known generalization is the generalized SVD (GSVD), also referred to as the quotient SVD of a matrix pair $(A, B)$ \cite{chu2000computation}, which corresponds to the SVD of $AB^{-1}$ if $B$ is square and nonsingular. Another generalization is the RSVD of a matrix triplet $(A, B, G)$ \cite{zha1991restricted} which shows the SVD of $B^{-1}AG^{-1}$ if $B$ and $G$ are square and nonsingular. Similarly, we have proposed generalizations of an SVD-based CUR decomposition: first, a generalized CUR (GCUR) decomposition of a matrix pair $(A, B)$ in \cite{gidisu2021}; second, in this paper, an RSVD-CUR decomposition of a matrix triplet $(A, B, G)$. We emphasize that an RSVD-CUR is more general than a GCUR decomposition. One can derive a GCUR decomposition from an RSVD-CUR factorization given special choices of the matrices $B$ or $G$ (we will see this in \cref{pp3}); however, we note that the converse does not hold. Both CUR decomposition and RSVD algorithms have been well-studied. However, to the best of our knowledge, this work is the first to combine both methods. The RSVD has been around for over three decades now; this new method introduces a new type of exploitation of the RSVD. 

In recent times, real-world data sets often contain multiple representations or viewpoints, each providing unique and complementary information. Our motivation for the RSVD-CUR factorization comes from the canonical correlation analysis (CCA) of a matrix pair $(B, G)$ (see, e.g., \cite{golub1995canonical}), which is related to the RSVD of the matrix triplet $(B^T\!G, B^T\!, G)$ (see \cref{sec: RSVD}). 
 CCA is a popular method for analyzing the relationships between two sets of variables, and it has broad applications in various fields \cite[pp.~443--454]{Hardle2015}. For example, in web classification problems, a document can be represented by either the words on the page (i.e., matrix $B$) or the words in the anchor text of links pointing to it (i.e., matrix $G$). Similarly, in a genome-wide association study, CCA is used to find genetic associations between genotype data (contained in $B$) and phenotype data (contained in $G$) \cite{chen2012structured}. CCA aims to find linear combinations of variables from each data set that exhibits the highest correlation with each other. These linear combinations, represented by the canonical vectors, form a basis for the correlated subspaces of the data sets. The first canonical vector pair has the highest correlation, and subsequent pairs have decreasing correlations.   

 Our goal is to extract subsets of columns or rows from $B$ and $G$ by utilizing the canonical vectors of each matrix that maximize the correlations between them. We believe that the RSVD-CUR factorization can be useful for multi-view dimension reduction and integration of information from multiple views in multi-view learning, a rapidly growing area of machine learning that involves using multiple perspectives to improve generalization performance \cite{xu2013survey}. Similar to CCA, the RSVD-CUR factorization can handle two-view cases and may also be utilized as a supervised feature selection technique in multi-label classification problems, where one view comes from the data and the other from the class labels.

Another motivation for an RSVD-CUR factorization stems from applications where the goal is to select a subset of rows and columns of one data set relative to two other data sets. An example is a data perturbation problem of the form $A_E=A + BFG$ where $BFG$ is a correlated noise matrix (see, e.g., \cite{beck2007matrix,zha1991restricted}) and the goal is to recover the low-rank matrix $A$ from $A_E$ given the structure of $B$ and $G$. Conventionally, when faced with this kind of perturbation problem, to use an SVD-based method, a prewhitening step is required to make the additive correlated noise a white noise using $B^{-1}$ and $G^{-1}$. However, with the RSVD formulation, the prewhitening operation becomes an integral part of the algorithm. It is worth pointing out that one does not necessarily need to know the exact noise covariance matrices; the RSVD and RSVD-CUR may still deliver good approximation results given inexact covariance matrices (see \cref{exp:2}). An example of an inexact covariance matrix is when we approximate the population covariance matrix by a sample covariance matrix.

\pg {Considering the ordinary or total least squares problem of the form $A\bx\approx \mathbf b$, in many applications, it is desirable to reduce the number of variables that are to be considered or measured in the future. For instance, the modeler may not be interested in a predictor such as $A\bx$ with all redundant variables but rather $A\wh \bx$, where $\wh \bx$ has at most $k$ nonzero entries. The position of the nonzero entries determines which columns of $A$, i.e., which variables to use in the model for approximating the response vector $\mathbf b$. How to pick these columns is the problem of subset selection, and one can use a CUR factorization algorithm. Consider the setting of generalized Gauss-Markov models with constraints, i.e., 
\begin{equation}\label{eq:guass-markov}
 \min_{\bx, \by, \bff} \ \norm{\by}^2 +\norm{\bff}^2 \quad \text{subject to}\quad \mathbf b=A\bx+B\by, \quad \bff =G\bx,
\end{equation}
where $A, B, G, \mathbf b$ are given. Notice that where $B=I$ and $G=0$, this formulation is a generalization of the traditional least squares problem. Since this equation involves three matrices, an appropriate tool for its analysis will be the RSVD \cite{de1991restricted,hansen}. For variable subset selection in this problem, the RSVD-CUR may be a suitable method that incorporates the error and the constraints (more details in \cref{sec: exper}).
}  

\textbf{Outline.} 
\Cref{sec: RSVD} gives a brief overview of the RSVD. \Cref{sec: Err-RSVD-CUR} introduces the new RSVD-CUR decomposition. In this section, we also discuss some error bounds. \Cref{algo: RSVD-CUR-DEIM} summarizes the procedure of constructing a DEIM type RSVD-CUR decomposition. Results of numerical experiments using synthetic and real data sets are presented in \cref{sec: exper}, followed by conclusions in \cref{sec: con}.

\section{Restricted SVD} \label{sec: RSVD} The RSVD of matrix triplets, as notably studied in \cite{zha1991restricted,de1991restricted}, is an essential building block for the proposed decomposition in this paper. We give a brief overview of this matrix factorization here. The RSVD may be viewed as a decomposition of a matrix relative to two other matrices of compatible dimensions. Given a matrix triplet $A\in \R^{m \times n}$ (where, without loss of generality, $m\ge n$), $B \in \R^{m \times \ell}$, and $G\in \R^{d\times n}$, following the formulation in \cite{zha1991restricted}, there exist orthogonal matrices $U\in \R^{\ell \times \ell}$ and $V \in \R^{d \times d}$, and nonsingular matrices $Z \in \R^{m \times m}$ and $W \in \R^{n \times n}$ such that 
\begin{equation}\label{eq: RSVD}
A = Z \, D_A \, W^T, \quad
B = Z \, D_B \, U^T, \quad
G = V \, D_G \, W^T,
\end{equation}
which implies
\[\begin{bmatrix}A & B \\ G \end{bmatrix}= \begin{bmatrix}Z & \\ & V\end{bmatrix} \begin{bmatrix}D_A & D_B \\ D_G\end{bmatrix} \begin{bmatrix}W & \\ & U\end{bmatrix}^T,
\]
where $D_A \in \R^{m\times n}$, $D_B \in \R^{m\times \ell}$, and $D_G \in \R^{d\times n}$ are quasi-diagonal matrices \footnote{A quasi-diagonal matrix, in this work, is a matrix that is diagonal after removing all zero rows and columns.}. We refer the reader to \cite{zha1991restricted} for detailed proof of the above decomposition. In the case of $m<n$, it is logical to take the transpose of the matrix triplet and interchange the position of $B$ and $G$ to ensure compatible dimensions, i.e., $(A^T,G^T,B^T$). With respect to the theory, applications and our experiments, we focus on the so-called {\em regular} matrix triplet $(A, B, G)$, i.e., $B$ is of full row rank and $G$ is of full column rank \cite{zha1992numerical}.

Algorithms for the computation of the RSVD are still an active field of research; some recent works include \cite{chu2000computation,zwaan2020robust}. As noted in \cite{de1991restricted}, the RSVD can be computed via a double GSVD. Following the formulation of the GSVD proposed by Van Loan \cite{Van}: Given $A\in\R^{m\times n}$ and $G\in\R^{d\times n}$ with $m, d \ge n$, there exist matrices $U \in{ \mathbb R ^{m \times m}}$, $V \in{ \mathbb R^{d \times d}}$ with orthonormal columns and a nonsingular $X \in {\mathbb R ^{n \times n}}$ such that
\begin{equation}
\begin{aligned}
\label{gsvd}
U^T\!AX &= \Gamma = \text{diag}(\gamma_1,\dots,\gamma_n), \qquad &\gamma_i\in [0,1],\\
V^T\!GX &=\Sigma = \text{diag}(\sigma_1,\dots,\sigma_n), \qquad &\sigma_i\in [0,1],
\end{aligned}
\end{equation}
where $\gamma_i^{2}+\sigma_i^{2} = 1 $. Let $Y := X^{-T}$ in the GSVD of \eqref{gsvd}. Then $A = U \Gamma Y^T$ and $G = V \Sigma Y^T$. The following is a practical procedure to construct the RSVD using the GSVD. For ease of presentation, we first assume that $m = \ell$ and $d = n$ so that $B$ and $G$ are square. Then, we have the following expression as the RSVD from two GSVDs:
\begin{align*}
\begin{bmatrix}A & B \\ G\end{bmatrix}
& = \begin{bmatrix}U_1 & \\ & V_1\end{bmatrix} \begin{bmatrix}\Gamma_1 & U_1^T B \\ \Sigma_1\end{bmatrix} \begin{bmatrix}Y_1^T \\ & I\end{bmatrix} \\
& = \begin{bmatrix}U_1 & \\ & V_1\end{bmatrix} \begin{bmatrix}\Gamma_1 \Sigma_1^{-1} & U_1^T B \\ I\end{bmatrix} \begin{bmatrix}\Sigma_1 Y_1^T \\ & I\end{bmatrix} \\
& = \begin{bmatrix}U_1 Y_2 & \\ & V_1\end{bmatrix} \begin{bmatrix}\Sigma_2^T & \Gamma_2^T \\ V_2\end{bmatrix} \begin{bmatrix}V_2^T \Sigma_1 Y_1^T \\ & U_2^T\end{bmatrix} \\
& = \begin{bmatrix}U_1 Y_2 & \\ & V_1 V_2\end{bmatrix} \begin{bmatrix}\Sigma_2^T \, \Gamma_G & \Gamma_2^T \\ \Gamma_G\end{bmatrix} \begin{bmatrix}Y_1 \Sigma_1 V_2 \Gamma_G^{-1} \\ & U_2\end{bmatrix}^T.
\end{align*}
The identity matrix is denoted by $I$. In these four steps, we have first computed the GSVD of $(A, G)$, i.e., $A = U_1 \, \Gamma_1 \, Y_1^T$ and $G = V_1 \, \Sigma_1 \, Y_1^T$.
Note that $\Sigma_1$ is nonsingular since $G$ is nonsingular.
Next, we compute the GSVD of the transposes of the pair $(U_1^TB, \, \Gamma_1 \Sigma_1^{-1})$, so that $U_1^T B = Y_2 \, \Gamma_2^T \, U_2^T$ and $\Gamma_1 \Sigma_1^{-1} = Y_2 \, \Sigma_2^T \, V_2^T$.
Moreover, $\Gamma_G$ is a nonsingular scaling matrix that one can freely select (see, e.g., \cite{zwaan2020robust}).
In this square case, we have $\Sigma_2^T = \Sigma_2$, but we keep this notation for consistency with the nonsquare case we will discuss now.

In some of our applications of interest (see \cref{exp:3}), we have that $\ell = d > m \ge n$.
In this case, we get the following modifications:
\begin{align*}
& \begin{bmatrix}U_1 & \\ & V_1\end{bmatrix} \begin{bmatrix}\Gamma_1 & U_1^T B \\ \begin{bmatrix}\Sigma_1 \\ 0_{d-n,n}\end{bmatrix}\end{bmatrix} \begin{bmatrix}Y_1^T \\ & I\end{bmatrix} \\
& = \begin{bmatrix}U_1 & \\ & V_1\end{bmatrix} \begin{bmatrix}\Gamma_1 \Sigma_1^{-1} & U_1^T B \\ \begin{bmatrix}I \\ 0_{d-n,n}\end{bmatrix}\end{bmatrix} \begin{bmatrix}\Sigma_1 Y_1^T \\ & I\end{bmatrix} \\
& = \begin{bmatrix}U_1 Y_2 & \\ & V_1\end{bmatrix} \begin{bmatrix}\Sigma_2^T & \Gamma_2^T \\ \begin{bmatrix}V_2 \\[-1mm] 0_{d-n,n}\end{bmatrix}\end{bmatrix} \begin{bmatrix}V_2^T \Sigma_1 Y_1^T \\ & U_2^T\end{bmatrix} \\
& = \begin{bmatrix}U_1 Y_2 & \\ & V_1 \wh V_2\end{bmatrix} \begin{bmatrix}\Sigma_2^T \, \Gamma_G & \Gamma_2^T\\ \begin{bmatrix}\Gamma_G \\[-1mm] 0_{d-n,n}\end{bmatrix}\end{bmatrix} \begin{bmatrix}Y_1 \Sigma_1 V_2 \Gamma_G^{-1} \\ & U_2 \end{bmatrix}^T.
\end{align*} 
In these steps, we use $\wh V_2 = \diag(V_2, I_{d-n})$. \cref{algo: RSVD} summarizes the procedure for computing the RSVD of the so-called {\em regular} matrix triplets $(A, B, G)$.
\begin{algorithm}[htb!]
{\footnotesize\begin{algorithmic}[1]
\REQUIRE $A\in \R^{m\times n}$, $B\in \R^{m\times \ell}$, $G\in \R^{d\times n}$, \quad $m \ge n$, \ $m \le \ell$, \ $d \ge n$
\ENSURE $Z\in\R^{m\times m}$, $W\in \R^{n\times n}$, $U\in\R^{\ell\times\ell}$, $V\in\R^{d\times d}$ \\\hspace{7mm} $D_A \in \R^{m\times n}$, $D_B \in \R^{m\times \ell}$, and $D_G \in \R^{d\times n}$  \quad (see \eqref{eq: RSVD})
\STATE Compute  $[U_1,V_1,Y_1,\Gamma_1,\Sigma_1] = {\sf gsvd}(A, G)$
\STATE Set $\Sigma_1 = \Sigma_1(1:n,:)$
\STATE Compute $[U_2, V_2,Y_2,\Gamma_2,\Sigma_2] = {\sf gsvd}(B^TU_1, \, (\Gamma_1\Sigma_1^{-1})^T)$
\STATE $\mathbf a = {\sf diag}(\Sigma_2) \quad (\in \R^n)$  
\STATE $\Gamma_G={\sf diag}(a_i \, (a_i^2+1)^{-1/2})$, \quad ($i = 1, \dots, n$)
\STATE {\bf if} $d = n$, \ \ $D_G = \Gamma_G;\ \  V = V_1V_2$; \ {\bf end}
\STATE {\bf if} $d > n$, \ \ $V = V_1 \cdot \text{diag}(V_2, \, I_{d-n}); \ \ D_G = [\Gamma_G; \ 0_{d-n,n}];$ \ {\bf end}
\STATE $Z=U_1Y_2$; \ \ $W= Y_1 \Sigma_1 V_2 \Gamma_G^{-1}$; \ \  $U= U_2$; \ \ $D_A = \Sigma_2^T \, \Gamma_G$; \ \ $D_B=\Gamma_2^T$
\end{algorithmic}}
 \caption{RSVD via a double GSVD}\label{algo: RSVD}
\end{algorithm}
In the two GSVD steps, we emphasize that we maintain the traditional nondecreasing ordering of the generalized singular values in both GSVDs. That is, the diagonal entries of $\Gamma_1$ and $\Gamma_2$ are in nondecreasing order while those of $\Sigma_1$ and $\Sigma_2$ are in nonincreasing order.
Note that $\Sigma_1$ is again nonsingular because $G$ is of full rank. With reference to \eqref{eq: RSVD}, we define $Z:= U_1Y_2$, $W:= Y_1 \Sigma_1 V_2 \Gamma_G^{-1}$, $V := V_1 \wh V_2$, $U:= U_2$, $D_A := \Sigma_2^T \, \Gamma_G$, $D_B:=\Gamma_2^T$, and $D_G :=\scriptsize{\begin{bmatrix}\Gamma_G \\ 0_{d-n,n}\end{bmatrix}}$. The quasi-diagonal matrices $D_A$ and $D_B$ have the following structure:
\begin{equation}\label{eq:diagmat}
D_A=\begin{bmatrix}
D_1\\
0_{m-n,n}
\end{bmatrix} \quad \text{and} \quad
 D_B=\begin{bmatrix}
 D_2& 0_{n,m-n}& 0_{n,\ell-m}\\
 0_{m-n,n} & I_{m-n}&0_{m-n,\ell-m}
 \end{bmatrix}.
\end{equation}
Write {\sf diag}$(D_1)=(\alpha_1,\dots, \alpha_n)$, {\sf diag}$(D_2)=(\beta_1,\dots, \beta_n)$, {\sf diag}$(\Gamma_G)=(\gamma_1,\dots, \gamma_n)$, and $\Sigma_2=\diag(\sigma_1,\dots,\sigma_n)$, for $i=1,\dots,n$. Note that, in view of the assumption that $B$ and $G$ are of full rank and $m\ge n$, $1>\alpha_i\ge \alpha_{i+1}>0$, $1>\gamma_i\ge \gamma_{i+1}>0$, and $0<\beta_i\le \beta_{i+1}<1$. 
As mentioned earlier, $\Gamma_G$ is a scaling matrix one can freely choose. Given $\Sigma_2$, we may, for instance, choose $\gamma_i= \sigma_i \, (\sigma_i^2+1)^{-1/2}$, which are nonzero and ordered nonincreasingly (since the function $t \mapsto t \, (t^2+1)^{-1/2}$ is strictly increasing). This implies that $\alpha_i= \sigma_i^2 \, (\sigma_i^2+1)^{-1/2}$. Given that $\beta_i^2+\sigma_i^2=1$ from the second GSVD, we have that $\alpha_i^2+\beta_i^2+\gamma^2_i=1$ for $i=1,\dots,n$. Note that $\frac{\alpha_i}{\beta_i\gamma_i}\ge\frac{\alpha_{i+1}}{\beta_{i+1}\gamma_{i+1}}$, which follows from the fact that $\alpha_i/\gamma_i=\sigma_i$, which are nonincreasing.

We now state a connection of the RSVD with CCA, which is a motivation for our proposed decomposition. In \cite{de1991restricted}, De Moor and Golub show a relation of the RSVD to a generalized eigenvalue problem. The related generalized eigenvalue problem of the RSVD of the matrix triplet $(B^T\!G, B^T\!, G)$ with $m=d$ as shown in \cite[Sec.~2.2]{de1991restricted} is 
\begin{align*}
 \begin{bmatrix}& B^T\!G\\G^TB & \end{bmatrix}\begin{bmatrix}\bz\\\bw\end{bmatrix}=\lambda\begin{bmatrix}B^T\!B & \\& G^T\!G\end{bmatrix}\begin{bmatrix}\bz\\\bw\end{bmatrix}.
\end{align*}
The above problem is exactly the generalized eigenvalue problem of the {\sf cca}$(B, G)$ (see, e.g., \cite{golub1995canonical}). Note that matrices $B^T\!B$ and $G^T\!G$ can be interpreted as covariance matrices. In applications where these covariance matrices are (nearly) singular, one may use the RSVD instead to find a solution without explicitly solving the generalized eigenvalue problem.

\section{A Restricted SVD based CUR decomposition and its approximation properties}\label{sec: Err-RSVD-CUR}
In this section, we describe the proposed RSVD-CUR decomposition and provide theoretical bounds on its approximation errors. We use MATLAB type notations to index vectors and matrices, i.e., $A(:,\bp)$ denotes the $k$ columns of $A$ with corresponding indices in vector $\bp \in \N_+^k$.

\subsection{A Restricted SVD based CUR decomposition}\label{sec: RSVD-CUR}
We now introduce a new RSVD-CUR decomposition of a matrix triplet $(A, B, G)$ with $A\in\R^{m \times n}$ (where, without loss of generality, $m\ge n$), $B\in\R^{m \times \ell}$, and $G\in\R^{d \times n}$ where $B$ and $G$ are of full rank. This RSVD-CUR factorization is guided by the knowledge of the RSVD for matrix triplets reviewed in \cref{sec: RSVD}. We now define a rank-$k$ RSVD-CUR approximation; cf.~\eqref{eq: cur}.
\begin{definition} \label{Dfn1}
Let $A$ be $m \times n$, $B$ be $m \times \ell$, and $G$ be $d \times n$. A rank-$k$ RSVD-CUR approximation of $(A, B, G)$ is defined as 
\begin{equation}\label{eq: rsvd-cur}
\begin{array}{lll}
 A &\approx& A_k:= C_A\ M_A\ R_A := AP\ M_A\ S^T\!A ~ ,\\[1mm]
 B &\approx& B_k:= C_B\ M_B\ R_B := BP_B\ M_B\ S^T\!B ~ ,\\[1mm]
 G &\approx &G_k:= C_G\ M_G \ R_G := GP \ M_G \ S_G^TG.
\end{array}
\end{equation}
Here $S \in \R^{m \times k}$, $S_G \in \R^{d \times k}$, $P \in \R^{n \times k}$, and $P_B \in \R^{\ell \times k}$ $(k \le \min(m, n, d,\ell))$ are index selection matrices with some columns of the identity that select rows and columns of the respective matrices. 
\end{definition}
It is key that {\em the same} rows of $A$ and $B$ are picked and {\em the same} columns of $A$ and $G$ are selected;
this gives a coupling among the decompositions. The matrices $C_A \in \R^{m\times k}$, $C_B\in \R^{m\times k}$, $C_G\in \R^{d\times k}$, and $R_A\in \R^{k\times n}$, $R_B\in \R^{k\times \ell}$, $R_G\in \R^{k\times n}$ are subsets of the columns and rows, respectively, of the given matrices. Let the vectors $\bss$, $\bss_G$, $\bp$, and $\bp_B$ contain the indices of the selected rows and columns so that $S=I(:,\bss)$, $S_G=I(:,\bss_G)$, $P=I(:,\bp)$, and $P_B=I(:,\bp_B)$. The choice of $\bss$, $\bss_G$, $\bp$, and $\bp_B$ is guided by the knowledge of the orthogonal and nonsingular matrices from the rank-$k$ RSVD. Given the column and row index vectors, following \cite{Sorensen,Mahoney, Stewart}, we compute the middle matrices as mentioned in \cref{sec: intro}, that is, $M_A=(C_A^T C_A)^{-1}C_A^T A R_A^T(R_A R_A^T )^{-1}$, and similarly for $M_B$ and $M_G$. 
There are several index selection strategies proposed in the literature for finding the ``best" row and column indices. The approaches we employ are the DEIM \cite{Chaturantabut} and the QDEIM \cite{Drmac} algorithms, which are greedy deterministic procedures and simple to implement.

The DEIM procedure has first been introduced in the context of model reduction of nonlinear dynamical systems \cite{Chaturantabut}; it is a discrete variant of empirical interpolation proposed in \cite{Barrault}. Sorensen and Embree later show how the DEIM algorithm is a viable column and row index selection procedure for constructing a CUR factorization \cite{Sorensen}. To construct $C$ and $R$, apply the DEIM scheme to the top $k$ right and left singular vectors, respectively \cite{Sorensen}. The DEIM procedure uses a locally optimal projection technique similar to the pivoting strategy of the LU factorization. The column and row indices are selected by processing the singular vectors sequentially as summarized in \cref{algo: DEIM} \footnote{The backslash operator used in the algorithms is a Matlab type notation for solving linear systems and least-squares problems.}. 
\begin{algorithm}[htb!]
{\footnotesize\begin{algorithmic}[1]
\REQUIRE $U \in \R^{m \times k}$ with $k\le m$ (full rank)
\ENSURE Indices $\bss \in \N_+^k$ with non-repeating entries 
\STATE $\bss(1)$ = $\argmax_{1\le i\le m}~ |(U(:,\,1))_i|$
\FOR{$j = 2, \dots, k$}
\STATE $U(:,\,j) = U(:,\,j)-U(:,\,1:j-1)\cdot (U(\bss,\,1:j-1)
\ \backslash \ U(\bss,\,j))$ 
\STATE $\bss(j)$ = $\argmax_{1\le i\le m}~ |(U(:,\,j))_i|$\hspace{3mm}
\ENDFOR
\end{algorithmic}}
 \caption{Discrete empirical interpolation index selection method ({\sf deim}) \cite{Chaturantabut}}\label{algo: DEIM}
\end{algorithm}
In \cite{Drmac}, the authors propose a QR-factorization based DEIM called QDEIM, which is much simpler than the original DEIM and enjoys a sharper error bound for the DEIM projection error. The availability of the pivoted QR implementation in many open-source packages makes this algorithm an efficient alternative index selection scheme. In the QDEIM approach, one applies a column-pivoted QR procedure on the transpose of the leading $k$ right and left singular vectors to find the indices for constructing the factors $C$ and $R$ in CUR approximation.

A DEIM type CUR decomposition requires singular vectors or approximate singular vectors. In this paper, we apply the DEIM procedure or its variant QDEIM to the nonsingular and orthogonal matrices from the RSVD instead. In an SVD-based CUR factorization, the left and right singular vectors serve as bases for the column and row spaces of matrix $A$, respectively. In our new context, the columns of matrices $Z$ and $W$ from \cref{eq: RSVD} may be viewed as bases for the column and row spaces, respectively, of $A$ relative to the column space of $B$ and the row space of $G$. The procedure for constructing a DEIM type RSVD-CUR is summarized in \cref{algo: RSVD-CUR-DEIM}. 
\begin{algorithm}[htb!]
{\footnotesize\begin{algorithmic}[1]
\setcounter{ALC@unique}{0}
 \REQUIRE $A \in \R^{m \times n}$, $B \in \R^{m \times \ell}$, $G \in \R^{d \times n}$, desired rank $k$ 
 \ENSURE A rank-$k$ RSVD-CUR decomposition \\
$A_k = A(:,\bp) \, \cdot \, M_A \, \cdot \, A(\bss,:)$, \ 
$B_k = B(:,\bp_B) \, \cdot \, M_B \, \cdot \, B(\bss,:)$, \ 
$G_k = G(:,\bp) \, \cdot \, M_G \, \cdot \, G(\bss_G,:)$

 \STATE Compute rank-$k$ RSVD of $(A,B,G)$ to obtain $W, Z, U , V$ \hfill (see \eqref{eq: RSVD}) \label{line1}
	\STATE\label{line2}{$\bp = {\sf deim}(W)$ \hfill (perform DEIM on the matrices from the RSVD)}
	\STATE\label{line3}{$\bss = {\sf deim}(Z)$}
 \STATE\label{line6}{$M_A = A(\,:,\bp) \, \backslash \ (A \, / \, A(\bss,:\,))$}
	\STATE\label{line4}{$\bp_B = {\sf deim}(U)$} \hfill (optional)
	\STATE\label{line5}{$\bss_G = {\sf deim}(V)$} \hfill (optional)
  \STATE\label{line7}Compute $M_B$ and $M_G$ as in \cref{line6} if needed
\end{algorithmic}}
\caption{DEIM type RSVD-CUR decomposition}\label{algo: RSVD-CUR-DEIM}
\end{algorithm}
In this implementation, the user is supposed to specify $k$. We note that one can also determine $k$ by comparing the decaying restricted singular values  $\frac{\alpha_i}{\beta_i\gamma_i}$ against a given
threshold. In some applications, the explicit approximation of $B$ or $G$ may not be necessary. Thus, \cref{line4,line5,line7} in \cref{algo: RSVD-CUR-DEIM} should only be implemented if necessary.
\begin{remark}
In \cref{line1} of \cref{algo: RSVD-CUR-DEIM}, the columns of $W$, $Z$, $U$, and $V$ corresponds to the $k$ largest restricted singular values $\frac{\alpha_i}{\beta_i\gamma_i}$. This implies that we select the most ``dominant" parts of $A$ and the least ``dominant" parts of $B$ and $G$, so in \eqref{eq: rsvd-cur} the relative approximation error of $A$ tends to be modest, while this may not be the case for the relative approximation errors of $B$ and $G$. 
\end{remark}

In many applications, as we will see in \cref{sec: exper}, one is interested in selecting only the key columns or rows and not the explicit $A\approx C_A M_A R_A$ factorization. An interpolative decomposition aims to identify a set of skeleton columns or rows of a matrix. A CUR factorization may be viewed as evaluating the ID for both the column and row spaces of a matrix simultaneously. The following are the column and row versions of an RSVD-ID factorization of a matrix triplet:
\begin{equation}\label{RSVD:ID}
 \begin{aligned}
&A \approx C_A\widetilde M_A, \quad B \approx C_B\widetilde M_B, \quad G \approx C_G\widetilde M_G, \quad \text{or} \\
&A\approx \wh M_A R_A, \quad B\approx \wh M_B R_B, \quad G\approx \wh M_G R_G.
\end{aligned} 
\end{equation}
Here, $\widetilde M_A=C_A^+\!A$ is $k \times n$ and $\wh M_A=AR^+_A$ is $m \times k$; analogous remarks hold for $\widetilde M_B$, $\widetilde M_G$, $\wh M_B$, and $\wh M_G$. Notice that in \cref{algo: RSVD-CUR-DEIM}, the key column and row indices of the various matrices are picked independently. This algorithm can therefore be restricted to select only column indices if we are interested in the column version of the RSVD-ID factorization or select only row indices if we are interested in the row version. 

De Moor and Golub \cite{de1991restricted} show the relation between the RSVD and the SVD and its other generalizations. We indicate in the following proposition the corresponding connection between the DEIM type RSVD-CUR and the (generalized) CUR decomposition \cite{Sorensen,gidisu2021}. 

\begin{proposition}\label{pp3} (i) If $B$ and $G$ are nonsingular matrices, then the selected row and column indices from a CUR decomposition of $B^{-1}AG^{-1}$ are the same as index vectors $\bp_B$ and $\bss_G$, respectively, obtained from an RSVD-CUR decomposition of $(A, B, G)$.

(ii) Furthermore, in the particular case where $B=I$ and $G=I$, the RSVD-CUR decomposition of $A$ coincides with a CUR decomposition of $A$, in that the factors $C$ and $R$ of $A$ are the same for both methods: the first line of \eqref{eq: rsvd-cur} is equal to \eqref{eq: cur}.

(iii) Lastly, given a special choice of $B=I$, an RSVD-CUR decomposition of $A$ and $G$ coincides with the GCUR decomposition of $(A, G)$ (see \cite[Def.~4.1]{gidisu2021}), in that the factors $C_A, C_G$ and $R_A, R_G$ of $A$ and $G$ are the same for both methods. In the dual case that $G=I$, similar remarks hold. 
\end{proposition}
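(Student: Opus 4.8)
The plan is to reduce all three parts to a single mechanism: identify the matrices to which {\sf deim} is applied in \cref{algo: RSVD-CUR-DEIM} with those to which {\sf deim} is applied in the corresponding (generalized) CUR procedure, and then appeal to an invariance of the DEIM selection. The workhorse is the following fact, which I would state and prove first. \emph{Scaling invariance of DEIM:} for any $U$ of full column rank and any nonsingular diagonal $D$, one has ${\sf deim}(UD)={\sf deim}(U)$. I would prove this by induction on the columns of $U$ in \cref{algo: DEIM}: replacing $U(:,j)$ by $c_j\,U(:,j)$ with $c_j\ne 0$ leaves each $\argmax_i|\cdot|$ unchanged (absolute values scale by $|c_j|>0$), and in the update step the interpolation coefficients $U(\bss,1{:}j{-}1)\backslash U(\bss,j)$ transform so that the new residual column is exactly the old one rescaled by $c_j$; hence every selected index is preserved. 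In particular the DEIM output depends only on the \emph{directions} of the input columns, so sign flips and column rescalings are immaterial.

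For part (i), $B$ and $G$ square and nonsingular force $\ell=m$, $d=n$, and make $D_A,D_B,D_G$ in \eqref{eq: RSVD} square and invertible. Using $U^{-T}=U$ and $V^{-1}=V^T$ together with \eqref{eq: RSVD}, a direct computation gives
\[
B^{-1}AG^{-1}=(U D_B^{-1} Z^{-1})\,(Z D_A W^T)\,(W^{-T} D_G^{-1} V^T)=U\,(D_B^{-1}D_A D_G^{-1})\,V^T.
\]
The middle factor $\Sigma':=D_B^{-1}D_A D_G^{-1}$ is diagonal with positive entries $\alpha_i/(\beta_i\gamma_i)$ ordered nonincreasingly (see \cref{sec: RSVD}), so this is a genuine SVD of $B^{-1}AG^{-1}$ whose leading $k$ left and right singular vectors are the leading $k$ columns of $U$ and $V$. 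Since a DEIM type CUR of $B^{-1}AG^{-1}$ picks its row indices by {\sf deim} on the left singular vectors and its column indices by {\sf deim} on the right singular vectors, the scaling-invariance lemma (covering any residual sign ambiguity, and allowing the CUR to use these particular singular vectors when singular values repeat) yields row indices ${\sf deim}(U)=\bp_B$ and column indices ${\sf deim}(V)=\bss_G$, proving (i).

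For parts (ii) and (iii) I would exhibit a realization of the RSVD that manifestly reduces to the (generalized) SVD underlying the (G)CUR, and then invoke the lemma. Part (ii) is immediate from the SVD $A=U_A\Sigma_A V_A^T$: the choice $Z=U=U_A$, $W=V=V_A$, $D_A=\Sigma_A$, $D_B=I_m$, $D_G=I_n$ satisfies all relations in \eqref{eq: RSVD} (in particular $Z D_B U^T=U_A U_A^T=I_m=B$ and $V D_G W^T=V_A V_A^T=I_n=G$), so \cref{line3,line2} of \cref{algo: RSVD-CUR-DEIM} return $\bss={\sf deim}(U_A)$ and $\bp={\sf deim}(V_A)$, exactly the row and column indices of the DEIM type CUR of $A$; hence the first lines of \eqref{eq: rsvd-cur} and \eqref{eq: cur} agree. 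For part (iii) with $B=I_m$, I would start from the GSVD $A=U_1\Gamma_1 Y_1^T$, $G=V_1\Sigma_1 Y_1^T$ of $(A,G)$ underlying the GCUR and set $Z=U=U_1$, $W=Y_1$, $V=V_1$, $D_A=\Gamma_1$, $D_G=\Sigma_1$, $D_B=I_m$; this is a valid RSVD of $(A,I_m,G)$ since $Z D_B U^T=U_1 U_1^T=I_m$. Then \cref{line3,line5,line2} compute $\bss={\sf deim}(U_1)$, $\bss_G={\sf deim}(V_1)$, and $\bp={\sf deim}(Y_1)$, which are precisely the $A$-row, $G$-row, and shared-column selections of the GCUR of $(A,G)$ (the lemma absorbing the diagonal rescaling between $W$ and the shared right factor); thus $C_A,C_G$ and $R_A,R_G$ coincide with their GCUR counterparts. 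The dual case $G=I_n$ follows by interchanging the roles of $(U,Z,\text{rows})$ and $(V,W,\text{columns})$.

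The main obstacle is that the RSVD is not unique, while in practice \cref{algo: RSVD-CUR-DEIM} constructs it through the double-GSVD of \cref{sec: RSVD}, which need not return the SVD/GSVD-aligned realizations used above; indeed the double-GSVD generally produces an $A$-row basis $Z$ whose columns are orthogonal \emph{mixtures}, not mere column rescalings, of $U_1$, so a naive identification of factors would fail. The crux of a fully rigorous argument is therefore the factor-matching step: certifying that the aligned realization is itself an admissible rank-$k$ RSVD (as I do above) and hence a legitimate input to the definition, rather than hoping every RSVD realization yields the same indices. Two bookkeeping issues must also be dispatched: the ordering conventions, since the two GSVDs in \cref{sec: RSVD} order the $\Gamma$-values nondecreasingly and the $\Sigma$-values nonincreasingly, so I must confirm that taking the leading $k$ columns matches the nonincreasing order of the restricted singular values that (G)CUR uses; and the boundary nature of $B=I$, where $\beta_i=1$ lies at the edge of the strict inequalities $0<\beta_i<1$ of \cref{sec: RSVD}, which is harmless but should be noted.
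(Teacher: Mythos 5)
Your proposal is correct and follows essentially the same route as the paper: in all three parts you express the SVD of $B^{-1}AG^{-1}$ (resp.\ the SVD of $A$, the GSVD of $(A,G)$) in terms of the RSVD factors and observe that the matrices fed to {\sf deim} differ only by nonsingular diagonal scalings. The only substantive differences are that you state and prove the diagonal-scaling invariance of DEIM as an explicit lemma---which the paper uses implicitly via the phrase ``up to a diagonal scaling''---and that for (ii)--(iii) you construct an aligned RSVD realization from the SVD/GSVD, whereas the paper derives the alignment ($Z=UD_B^{-1}$, $W^T=D_G^{-1}V^T$) directly from the defining relations of an arbitrary RSVD; both readings are legitimate given the non-uniqueness issue you already flag.
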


\begin{proof} (i) We start with the RSVD \eqref{eq: RSVD}. If $B$ and $G$ are nonsingular, then the SVD of $B^{-1}AG^{-1}$ can be expressed in terms of the RSVD of $(A, B, G)$, and is equal to $U (D_B^{-1}D_AD_G^{-1}) V^T$ given that $B^{-1} = U D_B^{-1} Z^{-1}$ and $G^{-1}=W^{-T}D_G^{-1}V^T$ \cite{de1991restricted}. Consequently, the row and column index vectors from a CUR factorization of $B^{-1}AG^{-1}$ are equal to the vectors $\bss_G$ and $\bp_B$, respectively, from an RSVD-CUR of $(A, B, G)$ since they are obtained by applying DEIM to matrices $V$ and $U$, respectively.

(ii) If $B=I$ and $G=I$, from \eqref{eq: RSVD}, $I=ZD_BU^T$ and $I=VD_GW^T$ which implies $UD_B^{-1}=Z$ and $W^T=D_G^{-1}V^T$. Hence, we find that $A=UD_B^{-1}D_AD_G^{-1}V^T$ which is an SVD of $A$. Therefore the selection matrices $P$, $S$ from CUR of $A$ \eqref{eq: cur} are equal to the selection matrices $P=P_B$, $S=S_G$ from an RSVD-CUR of $(A, I, I)$ \eqref{eq: rsvd-cur}.

(iii) If $B=I$, again from \eqref{eq: RSVD}, $I=ZD_BU^T$, which implies $UD_B^{-1}=Z$. Then $A=UD_B^{-1}D_AW^T$, $G=VD_GW^T$ which is (up to a diagonal scaling) the GSVD of the matrix pair $(A, G)$; see \eqref{gsvd} \cite{de1991restricted}. Thus, the column and row selection matrices from GCUR of $(A, G)$ (see \cite[Def.~4.1]{gidisu2021}) are the same as the column and row selection matrices $P$, $S$, $S_G$ from \eqref{eq: rsvd-cur}, respectively.
\end{proof}
\subsection{Error Analysis} We begin by analyzing the error of a rank-$k$ RSVD of a matrix triplet $A\in \R^{m \times n}$ (where without loss of generality $m\ge n$), $B \in \R^{m \times \ell}$, and $G\in \R^{d\times n}$ (where $B$ and $G$ are of full rank). Given our applications of interest in \cref{sec: exper}, we consider the case $\ell = d \ge m \ge n$. To define a rank-$k$ RSVD, let us partition the following matrices
\begin{align*}\label{eq: prsvd}
U &= [U_k \ \, \widehat{U}]~, \ \, V = [V_k \ \, \widehat{V}]~,\ \, W = [W_k \ \, \widehat{W}]~, \ \, Z = [Z_k \ \, \widehat{Z}]~,\\[1mm]
 D_A &= {\sf diag}( D_{A_k},\,\widehat D_A) ~,\ \, D_B = {\sf diag}(D_{B_k},\, \widehat D_B)~, \ \, D_G = {\sf diag}( D_{G_k}, \, \widehat D_G),
\end{align*}
where $\wh D_A \in \R^{(m-k)\times (n-k)}$, $\wh D_B \in \R^{(m-k)\times (\ell-k)}$, and $\wh D_G \in \R^{(d-k)\times (n-k)}$.
We define a rank-$k$ RSVD of $(A, B, G)$ as 
\begin{equation}
 \label{eq: trsvd}
 A_k := Z_k D_{A_k} W_k^T, \qquad B_k := Z_k D_{B_k} U_k^T, \qquad G_k := V_k D_{G_k} W_k^T,
\end{equation}
where $k < n$. It follows that 
\begin{equation} \label{eq: ersvd} A - A_k = \widehat{Z} \, \widehat{D}_A \, \widehat{W}^T, \qquad B - B_k = \widehat{Z} \, \widehat{D}_B \, \widehat{U}^T, \qquad G - G_k = \widehat{V} \, \widehat{D}_G \, \widehat{W}^T. \end{equation}
The following statements are a stepping stone for the error bound analysis of an RSVD-CUR. Denote the $i$th singular value of $A$ by $\psi_i(A)$. Let $A - A_k = \widehat{Z} \, \widehat{D}_A \, \widehat{W}^T$ as in \eqref{eq: ersvd}. Then for $i=1,\dots,n$, $\psi_i(\widehat{Z} \, \widehat{D}_A \, \widehat{W}^T) \leq \psi_i(\widehat{D}_A) ~ \norm{\wh Z}\,\norm{\wh W}$ (see, e.g., \cite[p.~346]{Horn}). Since the diagonal elements of $\widehat{D}_A$ are in nonincreasing order, we have 
$\norm {A - A_k} \leq \psi_1({\wh D}_A) ~ \norm{\wh Z}\,\norm{\wh W}\leq {\alpha_{k+1}}\cdot \norm{\widehat{Z}}\,\norm{\widehat{W}}$.

Similarly, we have that $ \norm {B - B_k}=\norm{\widehat{Z} \, \widehat{D}_B \, \widehat{U}^T} \leq \norm{\widehat{Z}}$ and $\norm {G - G_k}=\norm{\widehat{V} \, \widehat{D}_G \, \widehat{W}^T} \leq {\gamma_{k+1}}\cdot\norm{\widehat{W}}$. The first inequality follows from the fact $\wh U$ has orthonormal columns and the diagonal elements of $\widehat{D}_B$ are in nondecreasing order with a maximum value of 1, so we have that $\psi_1(\wh {D}_B)=1$ (see \cref{eq:diagmat} for the structure of $D_B$) and $\norm {\wh U}=1$. The second equality is a result of the fact that $\wh V$ has orthonormal columns and the diagonal entries of $\widehat{D}_G$ are in nonincreasing order, therefore, $\psi_1(\wh {D}_G)=\gamma_{k+1}$ and $\norm {\wh V}=1$.

We now introduce some error bounds of an RSVD-CUR decomposition in terms of the error of a rank-$k$ RSVD. The analysis closely follows the error bound analysis in \cite{Sorensen, gidisu2021} for the DEIM type CUR and DEIM type GCUR methods with some necessary modifications. As with the DEIM type GCUR method, here also, the lack of orthogonality of the vectors in $W$ and $Z$ from the RSVD necessitates some additional work. We take QR factorizations of $W$ and $Z$ to obtain their respective orthonormal bases to facilitate the analysis, introducing terms in the error bound associated with the triangular matrix in the QR factorizations. 

For the analysis, we use the following QR decompositions of the nonsingular matrices from the RSVD (see \cref{eq: RSVD})
\begin{align} \label{eq: QR}
\begin{split}
[Z_k \ \ \wh Z] &= Z = Q_ZT_Z = [Q_{Z_k} \ \ {\wh Q}_Z]
\begin{bmatrix}T_{Z_k} & T_{Z_{12}} \\ 0 & T_{Z_{22}} \end{bmatrix} = [Q_{Z_k} T_{Z_k} \ \ Q_Z {\wh T}_Z],\\
[W_k \ \ \wh W] &= W = Q_WT_W = [Q_{W_k} \ \ {\wh Q}_W]
\begin{bmatrix} T_{W_k} & T_{W_{12}} \\ 0 & T_{W_{22}} \end{bmatrix} = [Q_{W_k} T_{W_k} \ \ Q_W {\wh T}_W],
\end{split}
\end{align}
where we have defined
\begin{equation} \label{eq: T_hat}
{\wh T}_Z := \begin{bmatrix} T_{Z_{12}} \\ T_{Z_{22}} \end{bmatrix}~,\qquad {\wh T}_W := \begin{bmatrix} T_{W_{12}} \\ T_{W_{22}} \end{bmatrix}.
\end{equation}
This implies that
\begin{equation}\label{part:qr}
\begin{aligned}
A &= A_k + \wh Z \, \wh D_A \, \wh W^T = Z_k D_{A_k} W_k^T + \wh Z \, {\wh D}_A \, \wh W^T\\
&= Q_{Z_k} T_{Z_k} D_{A_k} T_{W_k}^T Q_{W_k}^T + Q_Z{\wh T}_Z \, {\wh D}_A \, {\wh T}_W^T Q_W^T~,\\
B &= B_k + \wh Z \, \wh D_B \, \wh U^T = Z_k D_{B_k} U_k^T + \wh Z \, {\wh D}_B \, \wh U^T=Q_{Z_k} T_{Z_k} D_{B_k} U_k^T + Q_Z{\wh T}_Z \, {\wh D}_B \, {\wh U}^T~, \\
G &= G_k + \wh V \, \wh D_G \, \wh W^T = V_k D_{G_k} W_k^T + \wh V \, {\wh D}_G \, \wh W^T = V_k D_{G_k} T_{W_k}^T Q_{W_k}^T + \wh V \, {\wh D}_G \, {\wh T}_W^T Q_W^T.
\end{aligned}
\end{equation}
Given an orthonormal matrix $Q_W\in \R^{n\times k}$, from \cite{Sorensen,gidisu2021} as well as here, we have that the quantity $\norm{A(I-Q_{W_k}Q_{W_k}^T)}$ is key in the error bound analysis. Here, we have that $\norm{A(I-Q_{W_k}Q_{W_k}^T)}$ may not be close to $\psi_{k+1}(A)$ since the matrix $Q_{W_k}$ is from the RSVD, therefore we provide a bound on this quantity in terms of the error in the RSVD. 

Let $P$ be an index selection matrix derived from performing the DEIM scheme on matrix $W_k$. Suppose $Q_{W_k}$ is an orthonormal basis for $\Range(W_k)$, with $W_k^T\!P$ and $Q_{W_k}^T\!P$ being nonsingular, we have the interpolatory projector $P(W_k^T\!P)^{-1}W_k^T = P(Q_{W_k}^T\!P)^{-1}Q_{W_k}^T$ (see \cite[Def.~3.1, (3.6)]{Chaturantabut}). With this equality, we exploit the special properties of an orthogonal matrix by using the orthonormal bases of the nonsingular matrices from the RSVD instead for our analysis. Let $Q_{W_k}^T\!P$ and $S^TQ_{Z_k}$ be nonsingular so that $\mathbb{P} = P(Q_{W_k}^T\!P)^{-1}Q_{W_k}^T$ and $\mathbb{S}=Q_{Z_k}(S^TQ_{Z_k})^{-1}S^T$ are oblique projectors. 

\pg{In the following theorem, we provide bounds on the coupled CUR decompositions of $A$, $B$, and $G$ in terms of the RSVD quantities.
The upper bounds contain both multiplicative factors (the $\eta$'s) and the $\alpha_{k+1}$, $\gamma_{k+1}$ (both bounded by 1), and $T$-quantities, which are from the error of the truncated RSVD as defined in \cref{eq: trsvd,eq: ersvd}. }

\begin{theorem} \label{theorem1} (Generalization of \cite[Thm.~4.1]{Sorensen} and \cite[Thm.~4.8]{gidisu2021}) Given $A$, $B$, and $G$ as in \cref{Dfn1} and $Z_k\in \R^{m\times k}$, $W_k\in \R^{n\times k}$, $U_k \in \R^{\ell \times k}$, and $V_k \in \R^{d\times k}$ from \eqref{eq: trsvd}, let $Q_{Z_k} \in \R^{m\times k}$, $Q_{W_k} \in \R^{n\times k}$ be the $Q$-factors of $Z_k, W_k$, respectively, and $\widehat{T}_Z$, $T_{Z_{22}}$, $\widehat{T}_W$, and $T_{W_{22}}$ as in \eqref{eq: QR}--\eqref{eq: T_hat}. Suppose $Q_{W_k}^T\!P$, $U_k^T\!P_B$, $S_G^TV_k$, and $S^TQ_{Z_k}$ are nonsingular, then with the error constants 
\[\eta_p := \norm{(Q_{W_k}^T\!P)^{-1}}, \ \eta_s := \norm{(S^TQ_{Z_k})^{-1}},\ \eta_{p_{B}} := \norm{(U_k^T\!P_B)^{-1}}, \ \eta_{s_{G}} := \norm{(S_G^TV_k)^{-1}},\]
we have
\begin{equation}\label{eq: error}
\begin{aligned}
\norm{A-C_AM_AR_A} &\leq \alpha_{k+1} \cdot (\eta_p\cdot\norm{{\wh T}_Z}\,\norm{T_{W_{22}}} +\eta_s\cdot\norm{ T_{Z_{22}}}\,\norm{{\wh T}_W}) \\
&\le \alpha_{k+1} \cdot (\eta_p + \eta_s) \cdot \norm{{\wh T}_W}\,\norm{{\wh T}_Z}~, \\ 
\norm{B-C_BM_BR_B} &\leq \eta_{p_{B}}\cdot\norm{T_{Z_{22}}} +\eta_s\cdot\norm{{\wh T}_Z} \le (\eta_{p_{B}} + \eta_s) \cdot \norm{{\wh T}_Z}~, \\ 
\norm{G-C_GM_GR_G} &\leq \gamma_{k+1} \cdot (\eta_p\cdot\norm{{\wh T}_W} +\eta_{s_{G}}\cdot\norm{T_{W_{22}}}) \le \gamma_{k+1} \cdot (\eta_p + \eta_{s_{G}}) \cdot \norm{{\wh T}_W}. 
\end{aligned}
\end{equation}
\end{theorem}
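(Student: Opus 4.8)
The plan is to prove all three coupled bounds in \eqref{eq: error} by the same two-stage reduction that underlies the DEIM-CUR and GCUR estimates, adapted to the non-orthogonal RSVD factors $Z$ and $W$. First I would exploit that the middle matrices are the optimal least-squares choices, so that $C_A M_A R_A = \Pi_{C_A}\,A\,\Pi_{R_A}$, where $\Pi_{C_A}=C_A C_A^+$ and $\Pi_{R_A}=R_A^+ R_A$ are the orthogonal projectors onto $\Range(C_A)$ and onto the row space of $R_A$. Splitting $A-\Pi_{C_A}A\Pi_{R_A} = (I-\Pi_{C_A})A + \Pi_{C_A}A(I-\Pi_{R_A})$ and using $\norm{\Pi_{C_A}}\le 1$ reduces the two-sided error to the two one-sided quantities $\norm{(I-\Pi_{C_A})A}$ and $\norm{A(I-\Pi_{R_A})}$.

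The observation that makes the RSVD projectors usable is that these orthogonal one-sided errors are minimal over all admissible factors, so any convenient competitor furnishes an upper bound. Concretely, $\norm{(I-\Pi_{C_A})A}=\min_Y\norm{A-C_A Y}\le\norm{A(I-\mathbb{P})}$ because $A\mathbb{P}=C_A\,(Q_{W_k}^T\!P)^{-1}Q_{W_k}^T$ is of the form $C_A Y$; dually $\norm{A(I-\Pi_{R_A})}=\min_X\norm{A-X R_A}\le\norm{(I-\mathbb{S})A}$ since $\mathbb{S}A=Q_{Z_k}(S^T\!Q_{Z_k})^{-1}R_A$ is of the form $X R_A$. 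Thus it suffices to bound the oblique one-sided errors $\norm{A(I-\mathbb{P})}$ and $\norm{(I-\mathbb{S})A}$; this competitor step is, in spectral norm, justified by the fact that $(I-\Pi_{C_A})A$ and $\Pi_{C_A}A-C_A Y$ have mutually orthogonal column spaces.

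For the oblique errors I would substitute the truncated-RSVD residual. The interpolation identities $Q_{W_k}^T\mathbb{P}=Q_{W_k}^T$ and $\mathbb{S}Q_{Z_k}=Q_{Z_k}$ together with \eqref{part:qr} give $A_k\mathbb{P}=A_k$ and $\mathbb{S}A_k=A_k$, hence $A(I-\mathbb{P})=(A-A_k)(I-\mathbb{P})$ and $(I-\mathbb{S})A=(I-\mathbb{S})(A-A_k)$ with $A-A_k=\wh Z\,\wh D_A\,\wh W^T$. Writing $\wh Z=Q_{Z_k}T_{Z_{12}}+\wh Q_Z T_{Z_{22}}$ and $\wh W=Q_{W_k}T_{W_{12}}+\wh Q_W T_{W_{22}}$ from \eqref{eq: QR}, the projector annihilates the leading $Q$-block and leaves only the trailing triangular block, namely $\wh W^T(I-\mathbb{P})=T_{W_{22}}^T\wh Q_W^T(I-\mathbb{P})$ and $(I-\mathbb{S})\wh Z=(I-\mathbb{S})\wh Q_Z T_{Z_{22}}$. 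Taking norms with $\norm{\wh D_A}=\alpha_{k+1}$, $\norm{I-\mathbb{P}}=\norm{\mathbb{P}}\le\eta_p$, $\norm{I-\mathbb{S}}=\norm{\mathbb{S}}\le\eta_s$, $\norm{\wh Z}=\norm{\wh T_Z}$, $\norm{\wh W}=\norm{\wh T_W}$, and $\norm{\wh Q_Z}=\norm{\wh Q_W}=1$, I obtain $\norm{A(I-\mathbb{P})}\le\alpha_{k+1}\eta_p\norm{\wh T_Z}\norm{T_{W_{22}}}$ and $\norm{(I-\mathbb{S})A}\le\alpha_{k+1}\eta_s\norm{T_{Z_{22}}}\norm{\wh T_W}$; summing yields the first line of \eqref{eq: error}, and the coarser second line then follows from $\norm{T_{W_{22}}}\le\norm{\wh T_W}$ and $\norm{T_{Z_{22}}}\le\norm{\wh T_Z}$.

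The bounds for $B$ and $G$ follow the same template with two simplifications: the factors $U$ and $V$ are orthogonal, so the projectors $\mathbb{P}_B=P_B(U_k^T\!P_B)^{-1}U_k^T$ and $\mathbb{S}_G=V_k(S_G^T\!V_k)^{-1}S_G^T$ need no QR correction and contribute no triangular factor, while $\norm{\wh D_B}=1$ and $\norm{\wh D_G}=\gamma_{k+1}$; using $\norm{\wh U}=\norm{\wh V}=1$ then produces the $B$- and $G$-estimates. I expect the main obstacle to be the bookkeeping in this last stage: tracking correctly which oblique projector acts on which side so that exactly one of $\norm{T_{22}}$ or $\norm{\wh T}$ enters each summand (the side on which the projector acts is reduced to its trailing block, the opposite side stays full), together with the projector-norm identity $\norm{I-\mathbb{P}}=\norm{\mathbb{P}}$ and the submultiplicative estimates $\norm{\mathbb{P}}\le\eta_p$, $\norm{\mathbb{S}}\le\eta_s$, and their analogues. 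The conceptual crux, by contrast, is the competitor argument of the second stage, since that is what allows the non-orthogonal RSVD bases $Z$ and $W$ to play the role of the singular vectors in the SVD-based theory.
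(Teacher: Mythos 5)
Your argument is correct and is essentially the paper's own proof: the same splitting of $A-C_AM_AR_A$ via the orthogonal projectors $C_AC_A^+$ and $R_A^+R_A$, the same reduction to the oblique interpolatory projectors $\mathbb{P}$ and $\mathbb{S}$ via the Sorensen--Embree competitor lemma, and the same use of the QR factors of $Z$ and $W$ so that the leading block is annihilated and only the trailing triangular block survives on the side where the projector acts; your shortcut $A(I-\mathbb{P})=(A-A_k)(I-\mathbb{P})$ merely replaces the paper's insertion of $I-Q_{W_k}Q_{W_k}^T$ and yields identical factors. One remark worth recording: for $B$ and $G$ your bookkeeping rule gives $\eta_{p_{B}}\norm{\wh T_Z}+\eta_s\norm{T_{Z_{22}}}$ and $\gamma_{k+1}\,(\eta_p\norm{T_{W_{22}}}+\eta_{s_{G}}\norm{\wh T_W})$, which pair the triangular factors oppositely to the first inequalities displayed in \eqref{eq: error}; your pairing is the one the derivation actually supports (the side carrying the oblique projector is reduced to its $T_{22}$ block, the opposite side keeps the full $\wh T$), while the coarser final bounds coincide either way, so the discrepancy is harmless but is in your favor.
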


\begin{proof}
We will prove the result for $\norm{A-C_AM_AR_A}$; results for $\norm{B-C_BM_BR_B}$ and $\norm{G-C_GM_GR_G}$ follow similarly. We first show the bounds on the errors between $A$ and its interpolatory projections $\mathbb{P}A$ and $A\mathbb{S}$, i.e., the selected rows and columns. Then, using the fact that these bounds also apply to the orthogonal projections of $A$ onto the same column and row spaces \cite[Lemma~4.2]{Sorensen}, we prove the bound on the approximation of $A$ by an RSVD-CUR.

Given the projector $\mathbb{P}$, we have that $Q_{W_k}^T\!\mathbb{P}=Q_{W_k}^T\!P(Q_{W_k}^TP)^{-1}Q_{W_k}^T=Q_{W_k}^T$, which implies $Q_{W_k}^T\!(I-\mathbb{P}) = 0$. Therefore the error in the oblique projection of $A$ is (cf. \cite[Lemma~4.1]{Sorensen})
\begin{align*}\norm{A-A\mathbb{P}}&=\norm{A(I-\mathbb{P})} = \norm{A(I - Q_{W_k}\!Q_{W_k}^T)(I-\mathbb{P})}\\
&\leq \norm{A(I - Q_{W_k}\!Q_{W_k}^T)}~\norm{I-\mathbb{P}}.\end{align*}
Note that, since $k<n$, $\mathbb{P} \ne 0$ and $\mathbb{P} \ne I$, it is well known that (see, e.g., \cite{Daniel})
\[\norm{I-\mathbb{P}} =\norm{\mathbb{P}} =\norm{(Q_{W_k}^T\!P)^{-1}}.\]
Using the partitioning of $A$ in \eqref{part:qr}, we have
\begin{align*}
A\ Q_{W_k}\, Q_{W_k}^T &= [Q_{Z_k} \ \ {\wh Q}_Z]
\begin{bmatrix} T_{Z_k} & T_{Z_{12}} \\ 0 & T_{Z_{22}} \end{bmatrix} \begin{bmatrix} D_{A_k} & 0\\ 0 & \widehat D_A \end{bmatrix} \begin{bmatrix} T_{W_k}^T & 0 \\[0.5mm] T_{W_{12}}^T & T_{W_{22}}^T \end{bmatrix} \begin{bmatrix} I_k \\ 0 \end{bmatrix} Q_{W_k}^T \\
&= Q_{Z_k} T_{Z_k} D_{A_k} T_{W_k}^T Q_{W_k}^T + Q_Z {\wh T}_Z \widehat D_A \, T_{W_{12}}^TQ_{W_k}^T,
\end{align*}
and hence
\begin{align*}
A\,(I-Q_{W_k}Q_{W_k}^T) & = (A-A_k)- Q_Z {\wh T}_Z \widehat D_A \, T_{W_{12}}^TQ_{W_k}^T \\
& = Q_Z{\wh T}_Z {\wh D}_A \, {\wh T}_W^T Q_W^T - Q_Z {\wh T}_Z \widehat D_A \, T_{W_{12}}^TQ_{W_k}^T = Q_Z{\wh T}_Z \wh D_A \, T_{W_{22}}^T {\wh Q^T}_W.
\end{align*}
This implies
\[\norm{A\,(I-Q_{W_k}Q_{W_k}^T)} \le \norm{\wh D_A} \,\norm{{\wh T}_Z}\, \norm{T_{W_{22}}} \le \alpha_{k+1} \cdot\norm{{\wh T}_Z}\, \norm{T_{W_{22}}},\]
and 
\[\norm{A(I-\mathbb{P})}\le\alpha_{k+1} \cdot\norm{(Q_{W_k}^T\!P)^{-1}}\,\norm{{\wh T}_Z}\, \norm{T_{W_{22}}}.\]
Let us now consider the operation on the left side of $A$. Given that $S^TQ_{Z_k}$ is nonsingular, we have the DEIM interpolatory projector $\mathbb{S}=Q_{Z_k}(S^TQ_{Z_k})^{-1}S^T$. It is known that (see \cite[Lemma~4.1]{Sorensen})
\begin{align*}\norm{A-\mathbb{S}A} &=\norm{(I-\mathbb{S})A}=\norm{(I-\mathbb{S})(I-Q_{Z_k}Q_{Z_k}^T)A}\\
&\le \norm{(I-\mathbb{S})}\,\norm{(I-Q_{Z_k}Q_{Z_k}^T)A}.
\end{align*}
Similar to before, since $k<m$, we know that $\mathbb{S} \ne 0$ and $\mathbb{S} \ne I$ hence 
\[\norm{I-\mathbb{S}} =\norm{\mathbb{S}} =\norm{(S^TQ_{Z_k})^{-1}}.\] 
In the same setting as earlier, we have the following expansion 
\begin{align*}
 Q_{Z_k}Q_{Z_k}^TA&=[Q_{Z_k} \ \ 0]
\begin{bmatrix} T_{Z_k} & T_{Z_{12}} \\ 0 & T_{Z_{22}} \end{bmatrix} \begin{bmatrix} D_{A_k} & 0\\ 0 & \widehat D_A \end{bmatrix} \begin{bmatrix} T_{W_k}^T & 0 \\[0.5mm] T_{W_{12}}^T & T_{W_{22}}^T \end{bmatrix}\begin{bmatrix} Q_{W_k}^T \\ \wh Q_{W}^T \end{bmatrix}\\
&= Q_{Z_k} T_{Z_k} D_{A_k} T_{W_k}^T Q_{W_k}^T + Q_{Z_k} T_{Z_{12}} \widehat D_A \, \wh T_W^TQ_W^T.
\end{align*}
We observe that 
\begin{align*}
 (I-Q_{Z_k}Q_{Z_k}^T) A &= (A-A_k)- Q_{Z_k} T_{Z_{12}} \widehat D_A \, \wh T_W^TQ_W^T\\
 &= Q_Z{\wh T}_Z \, {\wh D}_A \, {\wh T}_W^T Q_W^T-Q_{Z_k} T_{Z_{12}} \widehat D_A \, \wh T_W^TQ_W^T=\wh Q_Z \ T_{Z_{22}}\widehat D_A \, \wh T_W^TQ_W^T.
\end{align*}
 Consequently, 
\begin{align*}\norm{(I-Q_{Z_k}Q_{Z_k}^T) A}&=\norm{\wh Q_Z T_{Z_{22}}\widehat D_A \, \wh T_W^TQ_W^T}\leq \norm{\wh D_A} \norm{T_{Z_{22}}}\,\norm{\wh T_W} \\
&\leq \alpha_{k+1}\cdot\norm{T_{Z_{22}}}\,\norm{\wh T_W},\end{align*}
and
\[\norm{(I-\mathbb{S})A} \le\alpha_{k+1}\cdot\norm{(S^TQ_{Z_k})^{-1}}\,\norm{T_{Z_{22}}}\,\norm{\wh T_W}.\]
Suppose that $C_A$ and $R_A$ are of full rank. Given the orthogonal projectors $C_AC_A^+$ and $R^+_AR_A$ and computing $M_A$ as $(C_A^T C_A)^{-1}C_A^T A R_A^T(R_A R_A^T )^{-1}=C_A^+AR_A^+$, we have (see \cite[(6)]{Mahoney})
\[A-C_AM_AR_A=A-C_AC_A^+AR_A^+R_A=(I-C_AC_A^+)A+C_AC_A^+A(I-R_A^+R_A).\]
Using the triangle inequality, it follows that \cite{Mahoney,Sorensen}
\[\norm{A-C_AM_AR_A}=\norm{A-C_AC_A^+AR_A^+R_A}\leq \norm{(I-C_AC_A^+)A}+\norm{C_AC_A^+}~\norm{A(I-R_A^+R_A)}.\]
Leveraging the fact that $C_AC_A^+$ is an orthogonal projector so $\norm{C_AC_A^+}=1$, and \cite[Lemma~4.2]{Sorensen}

\[\norm{(I-C_AC_A^+)A}\leq \norm{A(I-\mathbb{P})}~, \quad
\norm{A(I-R_A^+\!R_A)}\leq \norm{(I-\mathbb{S})A},
\]
as a variant of \cite[Thm.~4.8]{gidisu2021} we have
\begin{align*}
 \norm{A-C_AM_AR_A} &\le \alpha_{k+1} \cdot (\norm{{\wh T}_Z}\, \norm{T_{W_{22}}}\,\norm{(Q_{W_k}^TP)^{-1}}+\norm{(S^TQ_{Z_k})^{-1}}\,\norm{T_{Z_{22}}}\,\norm{\wh T_W})\\
 &\le \alpha_{k+1}\cdot(\norm{(Q_{W_k}^TP)^{-1}}+\norm{(S^TQ_{Z_k})^{-1}})\cdot\norm{{\wh T}_Z}\,\norm{\wh T_W}.
\end{align*}
The last inequality follows directly from the fact that the norms of the submatrices of $\wh T_{Z_{22}}$ and $\wh T_{W_{22}}$ are at most $\norm{{\wh T}_Z}$ and $\norm{\wh T_W}$, respectively.
\end{proof}

\Cref{theorem1} suggests that to keep the approximation errors as small as possible, a good index selection procedure that provides smaller quantities $\norm{(U_k^TP_B)^{-1}}$, $\norm{(S_G^TV_k)^{-1}}$, $\norm{(Q_{W_k}^TP)^{-1}}$, and $\norm{(S^TQ_{Z_k})^{-1}}$ would be ideal. The DEIM procedure may be seen as an attempt to attain exactly that. Meanwhile, the quantity $\alpha_{k+1}\cdot\norm{{\wh T}_Z}\,\norm{\wh T_W}$ is a result of the error of the rank-$k$ RSVD. \pg{Additionally, we would like to point out that due to the connection of QDEIM with strong rank-revealing QR 
factorization (matrix volume maximization), the upper bounds for the error constants can be reduced to the theoretically best available one if the QDEIM procedure for the selection of the indices is used instead of the DEIM algorithm \cite{Drmac,drmac2018discrete}.}

Comparing the results of the decomposition of $A$ in \cref{theorem1} to \cite[Thm.~4.1]{Sorensen}, we have that the $\sigma_{k+1}$ in \cite[Thm.~4.1]{Sorensen} is replaced by the error in the RSVD through $\norm{(I-Q_{Z_k}Q_{Z_k}^T)A}$ and $\norm{A(I - Q_{W_k}\, Q_{W_k}^T)}$. Here, $\norm{(Q_{W_k}^TP)^{-1}}$, and $\norm{(S^TQ_{Z_k})^{-1}}$ are computed using the orthonormal bases of the nonsingular matrices from the RSVD of $A$ rather than the singular vectors. Compared with the results in \cite[Thm.~4.8]{gidisu2021} where all quantities are a result of the GSVD, in \cref{theorem1} we have an additional $\norm{\wh T_Z}$, and all quantities are a result of the RSVD.

\section{Numerical Experiments}\label{sec: exper}
In this section, we first evaluate the performance of the proposed RSVD-CUR decomposition for reconstructing a data matrix perturbed with nonwhite noise. We then show how the proposed algorithm can be used for feature selection in multi-view classification problems. In \cref{exp:3}, we only care about the key columns of $B$ and $G$ so we do not explicitly compute the RSVD-CUR factorization. We use the DEIM and the QDEIM index selection scheme interchangeably.

\pg{In \cref{exp:1,exp:2}, we consider two popular covariance structures \cite{wicklin2013simulating} for the correlated noise. The first covariance matrix has a compound symmetry structure ({\rm CSS}), which means the covariance matrix has constant diagonal and constant off-diagonal entries. A homogeneous compound symmetry covariance matrix is of the form 
\begin{equation}\label{cov:CS}
 {\rm CSS}_{\rm cov} =\nu^2\,\smtxa{rrrr}{
 1&\xi&\xi&\xi\\
 \xi&1&\xi&\xi\\
 \xi&\xi&1&\xi\\
 \xi&\xi&\xi&1},
\end{equation}
where $\nu$ and $\xi$ (with $-1<\xi<1$) denote the variance and correlation coefficient, respectively. The second covariance matrix we use has a first-order autoregressive structure ({\rm AR}(1)), which implies the matrix has a constant diagonal and the off-diagonal entries decaying exponentially; it assumes that the correlation between any two elements gets smaller the further apart they are separated (e.g., in terms of time or space). A homogeneous {\rm AR}(1) covariance matrix is of the form 
 \begin{equation}\label{cov:AR}
 {\rm AR}(1)_{\rm cov} =\nu^2\,\smtxa{llll}{
 1&\xi&\xi^2&\xi^3\\
 \xi&1&\xi&\xi^2\\
 \xi^2&\xi&1&\xi\\
 \xi^3&\xi^2&\xi&1}.
\end{equation}}

\begin{experiment}\label{exp:1} {\rm
In our first experiment, we address the problem of matrix perturbation, specifically $A_E = A + BFG$, where $F$ is a Gaussian random matrix and $B$ and $G$ are the Cholesky factors of non-diagonal noise covariance matrices.  The goal is to reconstruct a low-rank matrix $A$ from $A_E$ assuming that the noise covariance matrices or their estimates are known. The requirement that the noise covariance matrices or
their estimates should be known is not always trivial. We evaluate and compare a rank-$k$ RSVD-CUR and CUR decomposition of $A_E$ in terms of reconstructing matrix $A$. The approximation quality of each decomposition is assessed by the relative matrix approximation error, i.e., $\norm{A-\widetilde {A}}\,/\,\norm{A}$, where $\widetilde {A}$ is the reconstructed low-rank matrix. As an adaptation of the first experiment in \cite[Ex.~6.1]{Sorensen} we generate a rank-100 sparse nonnegative matrix $A\in \R^{10000\times 1000}$ of the form
\[A=\sum_{j=1}^{10}\frac{2}{j}\, \bx_j\ \by_j^T + \sum_{j=11}^{100}\frac{1}{j}\, \bx_j\ \by_j^T,\]
where $\bx_j\in \R^{10000}$ and $\by_j\in \R^{1000}$ are random sparse vectors with nonnegative entries (in Matlab, $\bx_j$={\sf sprand}(10000, 1, 0.025) and $\by_j$={\sf sprand}(1000, 1, 0.025)). We then perturb $A$ with a nonwhite noise matrix $BFG$ (see, e.g., \cite[p.~55]{hansen}). The matrix $F\in \R^{10000\times 1000}$ is random Gaussian noise. We assume that $B\in \R^{10000 \times 10000}$ is the Cholesky factor of a symmetric positive definite covariance matrix with compound symmetry structure \eqref{cov:CS} (with diagonal entries 4 and off-diagonal entries 1), and $G\in \R^{1000 \times 1000}$ is the Cholesky factor of a symmetric positive definite covariance matrix with first-order autoregressive structure \eqref{cov:AR} (with diagonal entries 1 and the off-diagonal entries related by a multiplicative factor $\xi= 0.99$). The resulting perturbed matrix we use is of the form $A_E=A+\varepsilon\frac{\norm{A}}{\norm{BFG}}BFG$, where $\varepsilon$ is the noise level. 

Given a noise level, we compare the naive approaches (rank-$k$ SVD-based methods), which do not take the structure of the noise into account with the methods (rank-$k$ RSVD-based methods), which consider the actual noise. That is, we compute the SVD of $A_E$ and the RSVD of $(A_E, B, G)$ to obtain the input matrices for a CUR and an RSVD-CUR decomposition, respectively. For each noise level, we generate ten random cases and take the average of the relative errors for varying $k$ values.

\Cref{fig: 1} summarizes the results of three noise levels (0.1, 0.15, 0.2). We observe that to approximate $A$, the RSVD-CUR factorization enjoys a considerably lower average approximation error than the CUR decomposition. The error of the former is at least half of the latter.
Meanwhile, the average relative error of an RSVD-CUR approximation unlike that of the RSVD (monotonically decreasing) approaches $\varepsilon$ after a certain value of $k$. This situation is natural because the RSVD-CUR routine picks actual columns and rows of $A_E$, so the relative error is likely to be saturated by the noise. The rank-$k$ SVD of $A_E$ fails in approximating $A$ for the given values of $k$. Its average relative error rapidly approaches $\varepsilon$; this is expected since the covariance of the noise is not a multiple of the identity. \pg{The truncated SVD of $A_E$ gives the optimal rank-$k$ approximation to $A_E$. However, here, the goal is to reconstruct the noise-free matrix $A$ hence, we measure the rank-$k$ approximation error against the unperturbed $A$. To heuristically explain why the SVD fails here, let us consider the ideal situation of a perturbation matrix $E$ whose entries are normally distributed with zero mean and standard deviation $\nu$ (white noise). We have that the expected value of $\norm{E}$ is approximately $\nu\sqrt{m}$ \cite{hansen}. Hence if the singular values of unperturbed $A$ decay gradually to zero, then the singular values of the perturbed matrix $A_E$ are expected to decrease monotonically (by definition) until they tend to settle at a level $\tau=\nu\sqrt{m}$ determined by the errors in $A_E$ \cite{hansen}. 
Note that given that the rank of $A$ is $k$, the singular values $(\psi_i)$ of $A_E$ will plateau at $i=k+1$. We can therefore expect to estimate $A$ in terms of the rank-$k$ SVD $A_E$ \cite{hansen}. In this experiment, given the structure of the noise $E=BFG$, the expected value of $\norm{E}$ is not approximately $\nu\sqrt{m}$. As a result, the rank-$k$ SVD of $A_E$ fails to estimate $A$. We need algorithms that can take the actual noise into account, which is typically done by the ``prewhitening" process.  Thus, given the nonsingular error equilibration matrices $B$ and $G$, we have that the error in $B^{-1}A_E\,G^{-1}$ is equilibrated, i.e., uncorrelated with zero mean and equal variance (white noise). One may then approximate $A$ by applying the truncated SVD to the matrix $B^{-1}A_E\,G^{-1}$, followed by a ``dewhitening" by a means of left and right multiplication with $B$ and $G$, respectively \cite{hansen}. However, it is worth noting that even when $B$ and $G$ are nonsingular it may be computationally risky to work with their inverses since they may be close to singular, and so in general it makes sense to reformulate the problem with no inverses. Using the RSVD formulation, this ``prewhitening" and ``dewhitening" process becomes an integral part of the algorithm.

Another observation worth pointing out is that a rank-$k$ CUR of $A_E$ yields a more accurate approximation of $A$ compared to the rank-$k$ SVD of $A_E$, although the column and row indices are selected using the singular vectors of $A_E$. This behavior can be attributed to the fact that the SVD is a linear combination of all the perturbed data points, so the total noise is included in the rank-$k$ approximation. On the other hand, with the CUR approximation, the $C$ and $R$ factors are actual columns and rows of $A_E$, so selecting $k$ columns and rows of $A_E$ implies that our approximation would only contain part and not the total noise. 

We also observe that the approximation accuracy of the QDEIM index selection method is comparable to that of the DEIM scheme, although the former is simpler and more efficient.

It is also worth noting that the improved performance of an RSVD-CUR approximation compared to a CUR factorization is particularly more attractive for higher noise levels with modest $k$ values, i.e., when $k$ is significantly less than rank$(A)$. One can observe from \cref{fig: 1} that the range of $k$ values that yield the lowest approximation error measured against unperturbed $A$ is between 10 and 20. Using the following $k$ values (10, 15, 20), in \cref{table:exp1}, we investigate the significant improvement of a DEIM-RSVD-CUR approximation over a DEIM-CUR factorization. With noise level $0.1$, we see that for the lowest error of a DEIM-CUR, which corresponds to $k=15$, a DEIM-RSVD-CUR produces about a 39\% reduction in the error. The improvement is even more significant if the noise level is $0.2$, where the rank-10 RSVD-CUR approximation reduces the rank-10 CUR decomposition error (lowest error of the CUR) by about 50\%.}
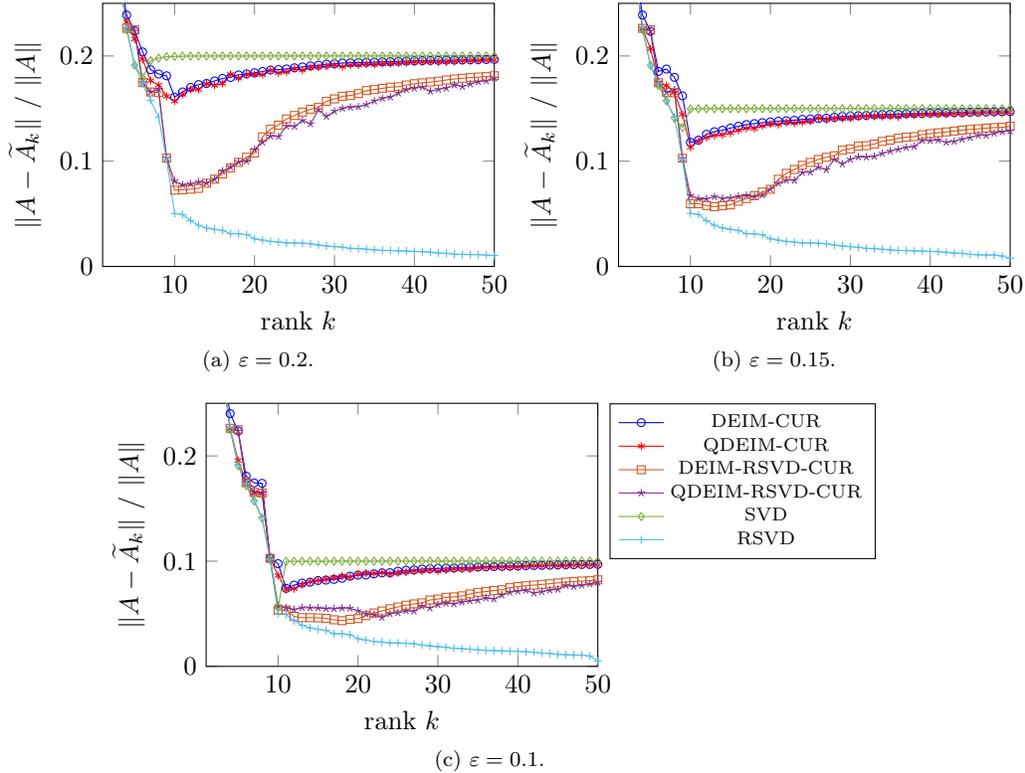
\begin{figure}[htb!]
 \centering
 \subfloat[$\varepsilon=0.2$.]{\label{fig: a}{
%
%
\definecolor{mycolor1}{rgb}{0.00000,0.44700,0.74100}%
\definecolor{mycolor2}{rgb}{0.85000,0.32500,0.09800}%
\definecolor{mycolor3}{rgb}{0.92900,0.69400,0.12500}%
\definecolor{mycolor4}{rgb}{0.49400,0.18400,0.55600}%
\definecolor{mycolor5}{rgb}{0.46600,0.67400,0.18800}%
\definecolor{mycolor6}{rgb}{0.30100,0.74500,0.93300}%
\begin{tikzpicture}

\begin{axis}[%
width=0.4\textwidth,
height=3.5cm,
at={(0\textwidth,0\textwidth)},
scale only axis,
xmin=1,
xmax=50,
ymin=0,
ymax=0.25,
xlabel=rank $k$,
ylabel={$\| A - \widetilde{A}_k \|\ /\ \| A \|$},
axis background/.style={fill=white}
]
\addplot [color=blue, mark=o, mark size=1.5pt,mark options={solid, blue}, forget plot]
  table[row sep=crcr]{%
1	0.618589639170852\\
2	0.37383795225893\\
3	0.284852787669925\\
4	0.238909243728072\\
5	0.223833607872855\\
6	0.203850677040055\\
7	0.18695656740094\\
8	0.182880149424912\\
9	0.181158163733663\\
10	0.160905415797211\\
11	0.16581108516302\\
12	0.170159193927934\\
13	0.172806002698074\\
14	0.173826396162659\\
15	0.176247051490059\\
16	0.178662427834773\\
17	0.179622936633376\\
18	0.181981889641854\\
19	0.182956360820265\\
20	0.184029940009212\\
21	0.185246068649812\\
22	0.186681084257055\\
23	0.187074851141511\\
24	0.187657297457112\\
25	0.188750142217667\\
26	0.18958197570928\\
27	0.190593745578657\\
28	0.19118173395958\\
29	0.192146110766377\\
30	0.192624074077385\\
31	0.193293722775025\\
32	0.193553082570238\\
33	0.193773953233855\\
34	0.193968877643802\\
35	0.194143346517406\\
36	0.194451237176449\\
37	0.194631481838725\\
38	0.194946349561951\\
39	0.195139361956872\\
40	0.195399690262316\\
41	0.195560330446947\\
42	0.195699402170255\\
43	0.19585860715172\\
44	0.196067904585662\\
45	0.196101381610148\\
46	0.196219934953358\\
47	0.196382360629493\\
48	0.196508980810189\\
49	0.196629560417201\\
50	0.196768685428789\\
};
\addplot [color=red, mark=asterisk, mark size=1.5pt,mark options={solid, red}, forget plot]
  table[row sep=crcr]{%
1	0.618589639170852\\
2	0.373861181336824\\
3	0.284870754356218\\
4	0.232414767665696\\
5	0.216050876297982\\
6	0.196766170838173\\
7	0.176530924723648\\
8	0.172266971547772\\
9	0.161721688031019\\
10	0.157123873247885\\
11	0.163436172579086\\
12	0.16818960851549\\
13	0.168144985972969\\
14	0.173415411516413\\
15	0.172047233646717\\
16	0.173820412335712\\
17	0.182551206466406\\
18	0.178849476829185\\
19	0.182323400715927\\
20	0.182404434512478\\
21	0.182165449086699\\
22	0.186376589986489\\
23	0.18394048874869\\
24	0.184986123621969\\
25	0.187915088136146\\
26	0.186521334266127\\
27	0.1891737224253\\
28	0.189173196422401\\
29	0.190543131250562\\
30	0.191195937802008\\
31	0.189885379519879\\
32	0.191279175708339\\
33	0.19145002396696\\
34	0.192097916623212\\
35	0.191866048233116\\
36	0.192576355119453\\
37	0.192633571878952\\
38	0.193049538622912\\
39	0.193630055988868\\
40	0.194096518172343\\
41	0.194317293085872\\
42	0.19449389659018\\
43	0.194287874822646\\
44	0.194353521446331\\
45	0.194836622444348\\
46	0.195364264692426\\
47	0.195494000967541\\
48	0.195652488499528\\
49	0.196012696211215\\
50	0.196112629289978\\
};
\addplot [color=mycolor2, mark=square, mark size=1.5pt,mark options={solid, mycolor2}, forget plot]
  table[row sep=crcr]{%
1	0.618589639170852\\
2	0.373861181336824\\
3	0.284912311088637\\
4	0.226244848825371\\
5	0.224867833493003\\
6	0.17467035749543\\
7	0.165810121865829\\
8	0.165237995498659\\
9	0.102941268309881\\
10	0.0724580607357817\\
11	0.0727838105834991\\
12	0.0733995620235099\\
13	0.0741640708530386\\
14	0.0789705381757431\\
15	0.0828829110695387\\
16	0.0878388664390992\\
17	0.093514253812125\\
18	0.0988491751492738\\
19	0.103709841240095\\
20	0.107713229964876\\
21	0.123072347000306\\
22	0.130439510405698\\
23	0.134311240268076\\
24	0.140151508186985\\
25	0.143447661227305\\
26	0.146669361043409\\
27	0.14940204748342\\
28	0.151121921331371\\
29	0.156907530990929\\
30	0.159183589541415\\
31	0.161726301945877\\
32	0.162401830588447\\
33	0.164217166203031\\
34	0.167090670950585\\
35	0.168093352356978\\
36	0.169255676107479\\
37	0.170343659783156\\
38	0.170746882442423\\
39	0.172429213358817\\
40	0.173604278778176\\
41	0.174457777901563\\
42	0.175268274234163\\
43	0.176035285722865\\
44	0.177215049904382\\
45	0.178136419443268\\
46	0.17878741576379\\
47	0.179408938907805\\
48	0.179946710529333\\
49	0.180376460771528\\
50	0.181227453751268\\
};

\addplot [color=mycolor4, mark=star, mark size=1.5pt,mark options={solid, mycolor4}, forget plot]
  table[row sep=crcr]{%
1	0.618589639170852\\
2	0.373861181336824\\
3	0.284912311088637\\
4	0.226244848825371\\
5	0.223717298395481\\
6	0.17467035749543\\
7	0.165810121865829\\
8	0.168294458043855\\
9	0.102943684605347\\
10	0.0812284128553158\\
11	0.0774747382442519\\
12	0.0784209717402609\\
13	0.08028105589695\\
14	0.0792786452805778\\
15	0.0828919539569193\\
16	0.0905588075237506\\
17	0.0945036929426316\\
18	0.0992476034682296\\
19	0.100077361966034\\
20	0.111066853360971\\
21	0.118058617329169\\
22	0.123833416265832\\
23	0.123443370169834\\
24	0.133290220787122\\
25	0.133107691532723\\
26	0.138722873294551\\
27	0.135482572276934\\
28	0.148946458775306\\
29	0.142485665134404\\
30	0.147509144814071\\
31	0.150243050430191\\
32	0.151279522931748\\
33	0.151734348912155\\
34	0.15387164574854\\
35	0.157214613037472\\
36	0.159769789305652\\
37	0.160911306051931\\
38	0.164632616391404\\
39	0.168044062243954\\
40	0.169336794976793\\
41	0.170531111626049\\
42	0.166445243566061\\
43	0.168341273681787\\
44	0.169415721117421\\
45	0.172374046085341\\
46	0.170892100242053\\
47	0.173378340075692\\
48	0.175430561797083\\
49	0.176410595309954\\
50	0.178989834819163\\
};
\addplot [color=mycolor5, mark=diamond, mark size=1.5pt,mark options={solid, mycolor5}, forget plot]
  table[row sep=crcr]{%
1	0.618499966109694\\
2	0.373737886096262\\
3	0.284012738863235\\
4	0.225003687985915\\
5	0.191163878994616\\
6	0.178972037271361\\
7	0.195640730669385\\
8	0.197986767016056\\
9	0.199237817550365\\
10	0.199623249209613\\
11	0.199748592660204\\
12	0.199904905073718\\
13	0.199937598507671\\
14	0.199949049801451\\
15	0.19998784947183\\
16	0.199988733273918\\
17	0.199988768002029\\
18	0.199988927560884\\
19	0.199989259455013\\
20	0.199989792020422\\
21	0.199989683788683\\
22	0.199991588345755\\
23	0.19999310421258\\
24	0.199995421820543\\
25	0.199995781270173\\
26	0.199996205806492\\
27	0.199996418229242\\
28	0.199996382943967\\
29	0.199997002298205\\
30	0.199996703110982\\
31	0.199997448671347\\
32	0.199997298891791\\
33	0.199998213970143\\
34	0.199998320832974\\
35	0.199998308081133\\
36	0.199998412320141\\
37	0.199998551138417\\
38	0.199998814643202\\
39	0.19999881940616\\
40	0.199999127956214\\
41	0.199999151991303\\
42	0.199999224981078\\
43	0.199999360307343\\
44	0.199999305350886\\
45	0.199999469600652\\
46	0.199999529349299\\
47	0.199999546016866\\
48	0.199999557088954\\
49	0.19999960018044\\
50	0.199999621385493\\
};
\addplot [color=mycolor6, mark=+, mark size=1.5pt, mark options={solid, mycolor6}, forget plot]
  table[row sep=crcr]{%
1	0.619089085045264\\
2	0.373758135716215\\
3	0.285591927471099\\
4	0.227018998640471\\
5	0.191328487478833\\
6	0.173772700593925\\
7	0.157707774440683\\
8	0.1419318513861\\
9	0.101577608343697\\
10	0.050355135068876\\
11	0.0493124431249677\\
12	0.0434761470089909\\
13	0.0390686843189686\\
14	0.0365414830638502\\
15	0.0352530631189007\\
16	0.034216294805891\\
17	0.0310825884989954\\
18	0.0310750518734763\\
19	0.0301412173738691\\
20	0.026249338773368\\
21	0.0250383957126139\\
22	0.0237545431099753\\
23	0.0231829960145032\\
24	0.022317381533851\\
25	0.0223030121665802\\
26	0.0221221377554589\\
27	0.0215043869649338\\
28	0.0202806769513426\\
29	0.0194831881496689\\
30	0.0188103104825503\\
31	0.0182015669267907\\
32	0.0171955202011606\\
33	0.0169224385825558\\
34	0.0163274586820729\\
35	0.0157107041306101\\
36	0.0152184339637\\
37	0.0151399791008567\\
38	0.0148157544688605\\
39	0.0147186173388337\\
40	0.0142528931687045\\
41	0.0140246805724919\\
42	0.0135113350263316\\
43	0.0126805490283582\\
44	0.0124216468435125\\
45	0.0116807373665936\\
46	0.01144076206586\\
47	0.0112568244531702\\
48	0.0111826880751008\\
49	0.0107131555309434\\
50	0.010323940290101\\
};
\end{axis}

\end{tikzpicture}
	\subfloat[$\varepsilon=0.15$.]{\label{fig: b}{
%
%
\definecolor{mycolor1}{rgb}{0.00000,0.44700,0.74100}%
\definecolor{mycolor2}{rgb}{0.85000,0.32500,0.09800}%
\definecolor{mycolor3}{rgb}{0.92900,0.69400,0.12500}%
\definecolor{mycolor4}{rgb}{0.49400,0.18400,0.55600}%
\definecolor{mycolor5}{rgb}{0.46600,0.67400,0.18800}%
\definecolor{mycolor6}{rgb}{0.30100,0.74500,0.93300}%
\begin{tikzpicture}

\begin{axis}[%
width=0.4\textwidth,
height=3.5cm,
at={(0\textwidth,0\textwidth)},
scale only axis,
xmin=1,
xmax=50,
ymin=0,
ymax=0.25,
xlabel=rank $k$,
ylabel={$\| A - \widetilde{A}_k \|\ /\ \| A \|$},
axis background/.style={fill=white}
]
\addplot [color=blue, mark=o, mark size=1.5pt,mark options={solid, blue}, forget plot]
  table[row sep=crcr]{%
1	0.618587589953284\\
2	0.373834875390268\\
3	0.284858898643265\\
4	0.238887068033857\\
5	0.223390888856427\\
6	0.185315581734372\\
7	0.187559103916704\\
8	0.179701758923256\\
9	0.162080593979779\\
10	0.117904064076142\\
11	0.119998815851982\\
12	0.125569227620016\\
13	0.128014903627897\\
14	0.129390815204126\\
15	0.131479637349047\\
16	0.133233192196865\\
17	0.134780919377073\\
18	0.135709946446332\\
19	0.136635786631301\\
20	0.137357466006346\\
21	0.138005357085233\\
22	0.138451515700301\\
23	0.138873464432135\\
24	0.139740740758507\\
25	0.14009594407504\\
26	0.140570419414833\\
27	0.140911385591412\\
28	0.141717406343482\\
29	0.142572399703126\\
30	0.142812914368636\\
31	0.143177721633176\\
32	0.143502407436994\\
33	0.144033158768804\\
34	0.14433764520909\\
35	0.144572984115473\\
36	0.144711861726496\\
37	0.145078096640535\\
38	0.145248332807801\\
39	0.145407096458277\\
40	0.145710773132248\\
41	0.145836491906285\\
42	0.146020331035124\\
43	0.146165024441756\\
44	0.146258763987734\\
45	0.146417168714881\\
46	0.146580444932807\\
47	0.146708003532463\\
48	0.14690615022877\\
49	0.147048233882721\\
50	0.147115027191353\\
};
\addplot [color=red, mark=asterisk, mark size=1.5pt,mark options={solid, red}, forget plot]
  table[row sep=crcr]{%
1	0.618587589953284\\
2	0.373853908171904\\
3	0.284871763158995\\
4	0.226186612179506\\
5	0.206669505571951\\
6	0.174649731175065\\
7	0.171884747707265\\
8	0.165072950716207\\
9	0.144401055481038\\
10	0.113206990674608\\
11	0.118991781660117\\
12	0.122583779293344\\
13	0.124418875194998\\
14	0.125087805395372\\
15	0.126753893547302\\
16	0.12941359097705\\
17	0.131254829663152\\
18	0.130989434762338\\
19	0.133472681089913\\
20	0.135236325347127\\
21	0.134836114447926\\
22	0.137109273533508\\
23	0.136143787370237\\
24	0.136950272129649\\
25	0.13782059118429\\
26	0.140891177850368\\
27	0.139583037244779\\
28	0.140659175421078\\
29	0.140683046540682\\
30	0.140848717148989\\
31	0.142145278952739\\
32	0.142541137294759\\
33	0.142584656035631\\
34	0.142454381545442\\
35	0.142773831228876\\
36	0.142146141805815\\
37	0.143765794673104\\
38	0.143740166324445\\
39	0.144495612024717\\
40	0.144495442190171\\
41	0.144761572863615\\
42	0.144496452304871\\
43	0.145159700317502\\
44	0.144966686217811\\
45	0.145850142043123\\
46	0.145790309965857\\
47	0.14629994544053\\
48	0.146084173314383\\
49	0.146172584771254\\
50	0.146581050308765\\
};
\addplot [color=mycolor2, mark=square, mark size=1.5pt,mark options={solid, mycolor2}, forget plot]
  table[row sep=crcr]{%
1	0.618587589953284\\
2	0.373853908171904\\
3	0.284898584515542\\
4	0.226227685220542\\
5	0.224880021765504\\
6	0.174584080001747\\
7	0.165524088997859\\
8	0.164937751094724\\
9	0.102777743304529\\
10	0.0595044482770422\\
11	0.0595339742742175\\
12	0.0578605246744232\\
13	0.0568006052889583\\
14	0.0576664163965346\\
15	0.0584809758413764\\
16	0.0619462838294377\\
17	0.0644864483165485\\
18	0.0672598151396459\\
19	0.0706952643147598\\
20	0.0732979151451628\\
21	0.0827885510291773\\
22	0.0884241942690219\\
23	0.0921214195627612\\
24	0.0960499589413616\\
25	0.0988647413270703\\
26	0.101635212766448\\
27	0.103728427870244\\
28	0.105141404163134\\
29	0.110458229350788\\
30	0.111726909645059\\
31	0.114253150013447\\
32	0.114779935988195\\
33	0.116299512327692\\
34	0.118861909881131\\
35	0.120068313104321\\
36	0.121830224402264\\
37	0.122793526514197\\
38	0.123326003761805\\
39	0.125285501289176\\
40	0.126159961653854\\
41	0.126991395385551\\
42	0.127803458652085\\
43	0.128653969202887\\
44	0.129544949625261\\
45	0.130174448829532\\
46	0.130782933151189\\
47	0.131425614195621\\
48	0.131914544287008\\
49	0.132243493224839\\
50	0.133178027906657\\
};

\addplot [color=mycolor4, mark=star, mark size=1.5pt,mark options={solid, mycolor4}, forget plot]
  table[row sep=crcr]{%
1	0.618587589953284\\
2	0.373853908171904\\
3	0.284898584515542\\
4	0.226227685220542\\
5	0.223758845445071\\
6	0.174584080001747\\
7	0.165524088997859\\
8	0.167991372764148\\
9	0.102725534261395\\
10	0.0674890389338511\\
11	0.0643362974272851\\
12	0.0641767178567442\\
13	0.066294978761119\\
14	0.0646084646918679\\
15	0.0651325896449415\\
16	0.0666267285552637\\
17	0.0673975262995887\\
18	0.0662814672942024\\
19	0.0679688742695086\\
20	0.0734876758616741\\
21	0.079031567239592\\
22	0.0819630783546615\\
23	0.0817822158868105\\
24	0.0892194786795643\\
25	0.0895886368867561\\
26	0.0943728560524134\\
27	0.0919325429023422\\
28	0.101729858323457\\
29	0.0973806747738048\\
30	0.101815905144833\\
31	0.104067642515477\\
32	0.103933829747912\\
33	0.10449982515256\\
34	0.107368523725019\\
35	0.109611476455835\\
36	0.111340074526143\\
37	0.113069652546404\\
38	0.113904917437643\\
39	0.119590363048604\\
40	0.119543103540091\\
41	0.120147315705114\\
42	0.117937866229693\\
43	0.118851927811945\\
44	0.120933218589676\\
45	0.122489142331537\\
46	0.124619750930282\\
47	0.124458326176003\\
48	0.126538031734515\\
49	0.127829282043038\\
50	0.128841712847919\\
};
\addplot [color=mycolor5, mark=diamond, mark size=1.5pt,mark options={solid, mycolor5}, forget plot]
  table[row sep=crcr]{%
1	0.618499736892814\\
2	0.373735810741103\\
3	0.283999535789841\\
4	0.224951148806723\\
5	0.190929949136154\\
6	0.171711434714284\\
7	0.157258734780401\\
8	0.140864401028916\\
9	0.131984720058802\\
10	0.14963467252609\\
11	0.149807195400161\\
12	0.149872266128427\\
13	0.149943710520612\\
14	0.149964818042608\\
15	0.14996704956764\\
16	0.149967277963146\\
17	0.149968537831492\\
18	0.149970531118461\\
19	0.149972340964272\\
20	0.14998330724411\\
21	0.149987328338669\\
22	0.149988655574895\\
23	0.149990219098356\\
24	0.149990905935946\\
25	0.149990958434226\\
26	0.149991857001988\\
27	0.149993136574355\\
28	0.149993650869998\\
29	0.149994534753426\\
30	0.149995299733663\\
31	0.149995760280893\\
32	0.149995601542509\\
33	0.149996229743861\\
34	0.149996869000796\\
35	0.149997229444971\\
36	0.149997561449742\\
37	0.14999801127511\\
38	0.149998404964211\\
39	0.149998373146979\\
40	0.149998355535438\\
41	0.14999863628408\\
42	0.149998812911562\\
43	0.149998883299499\\
44	0.149998970985403\\
45	0.149999034294212\\
46	0.149999067011759\\
47	0.149999135744737\\
48	0.14999937514744\\
49	0.149999483161381\\
50	0.149999546574631\\
};
\addplot [color=mycolor6, mark=+, mark size=1.5pt,mark options={solid, mycolor6}, forget plot]
  table[row sep=crcr]{%
1	0.619089083844624\\
2	0.373758107379424\\
3	0.285591784626085\\
4	0.227018461599264\\
5	0.191327952238398\\
6	0.173772649765011\\
7	0.15770673006643\\
8	0.141930733700263\\
9	0.101577035861298\\
10	0.0503437908735805\\
11	0.04930336345861\\
12	0.0434680647082711\\
13	0.0390637387656002\\
14	0.0365342996858554\\
15	0.0352237746041344\\
16	0.0341839605544175\\
17	0.031029178121289\\
18	0.0310206255882332\\
19	0.0300792570820822\\
20	0.0262483353459674\\
21	0.0250143991672615\\
22	0.0237268425540142\\
23	0.0231119995380747\\
24	0.0222783519099486\\
25	0.0222622795255874\\
26	0.0220700302897558\\
27	0.0214642877996989\\
28	0.0202564032590593\\
29	0.0194750640953821\\
30	0.0187391003149348\\
31	0.018129001232857\\
32	0.017186971842651\\
33	0.0168766311902077\\
34	0.0163199590522879\\
35	0.0156975664634151\\
36	0.0151883262337868\\
37	0.015078455564657\\
38	0.0147510010642847\\
39	0.0146423231854003\\
40	0.0142339500935338\\
41	0.0140018205424998\\
42	0.0134458094363076\\
43	0.0126014093202007\\
44	0.0123657589764408\\
45	0.0114533884274047\\
46	0.0110835772914783\\
47	0.0107974400302043\\
48	0.0107582948215377\\
49	0.00989686840959222\\
50	0.00772849707097023\\
};
\end{axis}

\end{tikzpicture}
	\subfloat[$\varepsilon=0.1$.]{\label{fig: c}{
%
%
\definecolor{mycolor1}{rgb}{0.00000,0.44700,0.74100}%
\definecolor{mycolor2}{rgb}{0.85000,0.32500,0.09800}%
\definecolor{mycolor3}{rgb}{0.92900,0.69400,0.12500}%
\definecolor{mycolor4}{rgb}{0.49400,0.18400,0.55600}%
\definecolor{mycolor5}{rgb}{0.46600,0.67400,0.18800}%
\definecolor{mycolor6}{rgb}{0.30100,0.74500,0.93300}%
\begin{tikzpicture}

\begin{axis}[%
width=0.4\textwidth,
height=3.5cm,
at={(0\textwidth,0\textwidth)},
scale only axis,
xmin=1,
xmax=50,
ymin=0,
ymax=0.25,
xlabel=rank $k$,
ylabel={$\| A - \widetilde{A}_k \|\ /\ \| A \|$},
axis background/.style={fill=white},
legend pos=outer north east
]
\addplot [color=blue, mark=o,mark size=1.5pt, mark options={solid, blue}]
  table[row sep=crcr]{%
1	0.61858570775142\\
2	0.373838738849578\\
3	0.284868183019796\\
4	0.240252708466364\\
5	0.223788548495257\\
6	0.180830845631842\\
7	0.17435864369073\\
8	0.173915626590947\\
9	0.10264077864004\\
10	0.0975663110692242\\
11	0.0741368517659358\\
12	0.0772429502278852\\
13	0.0791694376066318\\
14	0.0799782320223976\\
15	0.0816110517926237\\
16	0.0819699573469209\\
17	0.0826333481598634\\
18	0.0837088311980197\\
19	0.0857053542181337\\
20	0.0866623686696185\\
21	0.0868686943762482\\
22	0.087893762884346\\
23	0.0888878628666752\\
24	0.0890063195083511\\
25	0.0904263913751607\\
26	0.0912924899562383\\
27	0.0916473240047549\\
28	0.0924583576657456\\
29	0.0928093499486115\\
30	0.0930897764718062\\
31	0.0933437402377569\\
32	0.0934978340518904\\
33	0.0938496849577055\\
34	0.094064056176599\\
35	0.0943459284320266\\
36	0.0945869860887702\\
37	0.0947600786598649\\
38	0.094967739713799\\
39	0.0952177110197171\\
40	0.0954316820524645\\
41	0.0956612614285679\\
42	0.0958511322454847\\
43	0.0959942175248549\\
44	0.0961214971800699\\
45	0.0963974529827224\\
46	0.0965511483591493\\
47	0.0966307181249683\\
48	0.0967757628830771\\
49	0.0968322063356569\\
50	0.0969172055885621\\
};
\addlegendentry{DEIM-CUR}

\addplot [color=red, mark=asterisk,mark size=1.5pt,  mark options={solid, red}]
  table[row sep=crcr]{%
1	0.61858570775142\\
2	0.373847650940691\\
3	0.284875474509559\\
4	0.226185877155832\\
5	0.195692227722263\\
6	0.174570470986077\\
7	0.165379297798485\\
8	0.16476557716389\\
9	0.102702416209809\\
10	0.0862836646153813\\
11	0.0729743385491812\\
12	0.0737775108768682\\
13	0.0776743576444861\\
14	0.0803981572514816\\
15	0.0818340527696041\\
16	0.0830438307334446\\
17	0.0843053659407953\\
18	0.0858186131708653\\
19	0.0847361096856937\\
20	0.0875071653700941\\
21	0.088758846376341\\
22	0.0883073095324763\\
23	0.0890347280901511\\
24	0.0875306436998498\\
25	0.0889380582059765\\
26	0.089512627088823\\
27	0.0905532290281432\\
28	0.0906194560045435\\
29	0.0914589202387378\\
30	0.0905371572836109\\
31	0.0913686056307103\\
32	0.0923909759775609\\
33	0.0925454734035652\\
34	0.0925626309829367\\
35	0.0936213507798097\\
36	0.0942322590666475\\
37	0.0942503550123126\\
38	0.0944697125350367\\
39	0.0944293771970815\\
40	0.0944988063221427\\
41	0.0951631026780533\\
42	0.0955636346005061\\
43	0.0955538351430543\\
44	0.0962644399228854\\
45	0.0956435687506131\\
46	0.0959053435039445\\
47	0.0963293382799722\\
48	0.0964115992577201\\
49	0.0963394579757136\\
50	0.0963057164567655\\
};
\addlegendentry{QDEIM-CUR}

\addplot [color=mycolor2, mark=square,mark size=1.5pt, mark options={solid, mycolor2}]
  table[row sep=crcr]{%
1	0.61858570775142\\
2	0.373847650940691\\
3	0.284887597457364\\
4	0.226214802585625\\
5	0.224898457664488\\
6	0.174524613757652\\
7	0.165327856819534\\
8	0.164732081420906\\
9	0.102693177723829\\
10	0.053452301473139\\
11	0.0534860480075423\\
12	0.0474775243181702\\
13	0.0462835233882109\\
14	0.0462673670798101\\
15	0.0460772379595614\\
16	0.0456424618912566\\
17	0.0442507791363077\\
18	0.0434402498628182\\
19	0.0447638413196049\\
20	0.0453808567237216\\
21	0.0482012614007792\\
22	0.0507795526222463\\
23	0.0526145921510302\\
24	0.054817180866232\\
25	0.0564826481520757\\
26	0.0581302865202639\\
27	0.0593337700442001\\
28	0.0604082544995824\\
29	0.0630691066152634\\
30	0.0639329636300992\\
31	0.065627966207763\\
32	0.0659950274667779\\
33	0.0670029027965841\\
34	0.0687887836412554\\
35	0.0703390826878223\\
36	0.0710673119308185\\
37	0.0720553976972604\\
38	0.0727862403928456\\
39	0.0747735108875343\\
40	0.0758072866357835\\
41	0.0766567578067809\\
42	0.0772363816301812\\
43	0.0779339710087449\\
44	0.0784842173855519\\
45	0.079315601561266\\
46	0.0801262813313331\\
47	0.0805878972878892\\
48	0.081085724935715\\
49	0.0814130834077264\\
50	0.0822694031059791\\
};
\addlegendentry{DEIM-RSVD-CUR}

\addplot [color=mycolor4, mark=star, mark size=1.5pt, mark options={solid, mycolor4}]
  table[row sep=crcr]{%
1	0.61858570775142\\
2	0.373847650940691\\
3	0.284887597457364\\
4	0.226214802585625\\
5	0.223793971821218\\
6	0.174524613757652\\
7	0.165327856819534\\
8	0.164710224658145\\
9	0.102636225576063\\
10	0.0575711814310072\\
11	0.0555212460946976\\
12	0.0539640957623276\\
13	0.0557742042834762\\
14	0.0555800345468977\\
15	0.0554529788549028\\
16	0.0549160737092532\\
17	0.0548149767403543\\
18	0.055357439793813\\
19	0.0552581718965228\\
20	0.0526943147676941\\
21	0.0498072544842477\\
22	0.0489825489018741\\
23	0.0465354543747045\\
24	0.0499600097762124\\
25	0.0509133334077786\\
26	0.0527624379516493\\
27	0.0524096874539975\\
28	0.057138414643649\\
29	0.0554109300389461\\
30	0.0586395466511053\\
31	0.0599991759886941\\
32	0.059289455274514\\
33	0.0610312442026065\\
34	0.0622517926123453\\
35	0.0629636207099854\\
36	0.0649355134946004\\
37	0.0639837772262489\\
38	0.0677002966763589\\
39	0.0703954129723407\\
40	0.0714915806414178\\
41	0.0722458001628722\\
42	0.0696247056369755\\
43	0.0709861050470607\\
44	0.0723255083503375\\
45	0.0731598219109929\\
46	0.0769660335346884\\
47	0.0759546997104748\\
48	0.0771239440675273\\
49	0.0784230563906222\\
50	0.0798036597101144\\
};
\addlegendentry{QDEIM-RSVD-CUR}

\addplot [color=mycolor5, mark=diamond, mark size=1.5pt, mark options={solid, mycolor5}]
  table[row sep=crcr]{%
1	0.618499601501808\\
2	0.373734892538762\\
3	0.283995544802594\\
4	0.224939278320009\\
5	0.190900814721668\\
6	0.171658846855349\\
7	0.157070497518413\\
8	0.140600841369687\\
9	0.101520914021384\\
10	0.055793661011709\\
11	0.0998400984829982\\
12	0.0998490805417751\\
13	0.0998740144385101\\
14	0.0998764624750204\\
15	0.0998928502737389\\
16	0.0998954123165885\\
17	0.0999302415255217\\
18	0.0999471768238345\\
19	0.0999544390790734\\
20	0.0999603394309056\\
21	0.0999643917887586\\
22	0.0999667622426858\\
23	0.0999688042743287\\
24	0.0999730555666229\\
25	0.0999787483686031\\
26	0.0999825415966006\\
27	0.0999847780853258\\
28	0.0999858931966525\\
29	0.0999873905802988\\
30	0.0999891957623766\\
31	0.099990329410315\\
32	0.099992012711888\\
33	0.099992435715872\\
34	0.0999937150081774\\
35	0.0999943176717474\\
36	0.099994690818476\\
37	0.0999952205638917\\
38	0.0999956648901051\\
39	0.0999963095171315\\
40	0.0999972620529281\\
41	0.0999979320471552\\
42	0.0999982113790842\\
43	0.099998193821655\\
44	0.0999984902281839\\
45	0.0999985377573702\\
46	0.0999986657826672\\
47	0.0999986707798501\\
48	0.0999987924609163\\
49	0.0999988272136873\\
50	0.0999990867385357\\
};
\addlegendentry{SVD}

\addplot [color=mycolor6, mark=+, mark size=1.5pt, mark options={solid, mycolor6}]
  table[row sep=crcr]{%
1	0.61908908776182\\
2	0.373758099054756\\
3	0.285591682795714\\
4	0.227018029487604\\
5	0.191327497148098\\
6	0.173772716940304\\
7	0.157705947000763\\
8	0.141929816252313\\
9	0.101576620030575\\
10	0.0503357459045396\\
11	0.049297168048903\\
12	0.0434623329755435\\
13	0.0390603468148501\\
14	0.0365294583917007\\
15	0.0352034219766175\\
16	0.0341615797951084\\
17	0.0309923272960697\\
18	0.0309830951027361\\
19	0.0300355357189081\\
20	0.0262478744340724\\
21	0.024998939893539\\
22	0.0237106443929794\\
23	0.0230652458380812\\
24	0.0222539334582003\\
25	0.0222367482353292\\
26	0.0220363005951427\\
27	0.0214390946861313\\
28	0.020242094425505\\
29	0.0194704444735152\\
30	0.0186973244116094\\
31	0.0180868794229731\\
32	0.0171816301021681\\
33	0.0168520351967566\\
34	0.0163154962150332\\
35	0.0156918382643458\\
36	0.0151839331235311\\
37	0.015054921246237\\
38	0.0147225489259903\\
39	0.0146079336797585\\
40	0.0142256942829856\\
41	0.0139914092648215\\
42	0.0134228587719296\\
43	0.0125884383984018\\
44	0.0123572257118836\\
45	0.0114446655379605\\
46	0.0110685667732108\\
47	0.0107733252756026\\
48	0.010741511317375\\
49	0.00987964069237213\\
50	0.00514495760189489\\
};
\addlegendentry{RSVD}

\end{axis}

\end{tikzpicture}
	\captionsetup{format=hang}
	\caption{The approximation quality of RSVD-CUR approximations compared with CUR approximations in recovering a sparse, nonnegative matrix $A$ perturbed with nonwhite noise. The average relative errors $\norm{A-\widetilde {A}_k}\,/\,\norm{A}$ (on the vertical axis) as a function of rank $k$ (on the horizontal axis) for $\varepsilon=0.2$, $0.15$, $0.1$, respectively.\label{fig: 1}}
\end{figure}

\begin{table}[htb!]
\centering
\caption{The approximation quality of the DEIM-RSVD-CUR approximations compared with DEIM-CUR approximations in recovering a sparse, nonnegative matrix $A$ perturbed with nonwhite noise. The average relative errors $\norm{A-\widetilde {A}_k}\,/\,\norm{A}$ for $k$ values (10, 15, 20).}\label{table:exp1}\vspace{-2mm}
{\scriptsize \begin{tabular}{clccc} \\ \hline \rule{0pt}{2.3ex}%
{\textbf{Noise}}&{\textbf{Method $\backslash$ \ $k$ }} & 10 & 15 & 20 \\ \hline \rule{0pt}{2.8ex}%
\multirow{2}{*}{$\varepsilon=0.1$}&CUR-$\widetilde {A}_k$ & 0.100 & 0.084 & 0.089 \\
&RSVD-CUR-$\widetilde {A}_k$ & 0.064 & 0.051 & 0.049 \\
\\[-2mm] \hdashline \rule{0pt}{3ex}%
\multirow{2}{*}{$\varepsilon=0.2$}&CUR-$\widetilde {A}_k$ & 0.162 & 0.177 & 0.184 \\
&RSVD-CUR -$\widetilde {A}_k$ & 0.080 & 0.084 & 0.106 \\ \hline
\end{tabular}}
\end{table}

In \cref{fig:3}, using the  DEIM-RSVD-CUR decomposition of $A_E$ with noise level $\varepsilon=0.1$, we show the various quantities in \cref{theorem1}. We observe that the upper bound in \cref{theorem1} may be rather pessimistic, and the true DEIM-RSVD-CUR error may be substantially lower in practice. As in \cite[Fig.~4]{Sorensen}, the magnitude of the quantities $\eta_s$ and $\eta_p$ may vary. We see that $\|\wh T_W\|$ and $\|\wh T_Z\|$ seem to stabilize as $k$ increases and both quantities are close to $\norm{A_E}$.

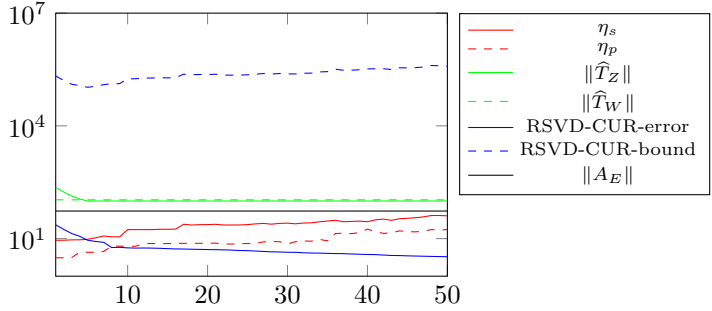
\begin{figure}[ht!]
 \centering
%
%
\begin{tikzpicture}

\begin{axis}[%
width=0.4\textwidth,
height=3.5cm,
at={(0cm,0cm)},
scale only axis,
xmin=1,
xmax=50,
ymode=log,
ymin=1,
ymax=10000000,
yminorticks=true,
axis background/.style={fill=white},
legend pos=outer north east
]
\addplot [color=red]
  table[row sep=crcr]{%
1	9.03541507786725\\
2	9.05461637056752\\
3	9.26328349786734\\
4	9.29443159383141\\
5	9.60654261353564\\
6	10.4937166685237\\
7	11.78457653222\\
8	11.1357759473105\\
9	11.1646295953123\\
10	17.3592016636258\\
11	17.3609328927901\\
12	17.3531360379414\\
13	17.3381365381207\\
14	17.7381860726081\\
15	17.7397281699735\\
16	17.8959332581668\\
17	24.1117400919365\\
18	23.014652989373\\
19	23.5098414317779\\
20	23.6220973499531\\
21	24.0466498278443\\
22	23.0393558769694\\
23	23.0093515042351\\
24	23.0600100771654\\
25	23.5920787045176\\
26	24.6773005729898\\
27	25.36790486331\\
28	24.2395079566936\\
29	25.4230046462976\\
30	25.7899315618679\\
31	24.7832589375138\\
32	26.0037444709257\\
33	26.3251600255227\\
34	28.0357334385579\\
35	29.4030249993699\\
36	30.8043937648428\\
37	28.0873214164272\\
38	28.736635241071\\
39	29.1764996918394\\
40	28.1662501687267\\
41	31.6285388773565\\
42	33.1020040739779\\
43	30.6050528286326\\
44	33.8651489589827\\
45	34.6907182952191\\
46	35.9859507463175\\
47	37.8564943148798\\
48	41.2708419729982\\
49	41.2023779689768\\
50	40.378707547162\\
};
\addlegendentry{$\eta_s$}

\addplot [color=red, dashed]
  table[row sep=crcr]{%
1	3.09272266802388\\
2	3.09252422308324\\
3	3.13666484569414\\
4	4.33540412069492\\
5	4.33582047703507\\
6	4.33771438102516\\
7	4.56531744588117\\
8	6.28105564191516\\
9	6.29408408113346\\
10	6.26807915062694\\
11	6.2684423867477\\
12	7.3569219579499\\
13	7.4183132269556\\
14	7.38367283561077\\
15	7.38769901702016\\
16	7.45209373405929\\
17	7.47343175753972\\
18	7.52234027889866\\
19	7.49937716228286\\
20	7.51252344618844\\
21	7.58953124929045\\
22	7.14837672769272\\
23	7.19179055972869\\
24	7.39386121956852\\
25	7.40559881169929\\
26	7.46255737435728\\
27	8.19502587267436\\
28	8.40228144411053\\
29	8.28484048777967\\
30	7.20664721174371\\
31	8.29775121763298\\
32	8.65300055101368\\
33	8.54577720736056\\
34	9.54249557535596\\
35	8.71183364916172\\
36	13.6919679791898\\
37	13.9141905597566\\
38	13.5751246196672\\
39	14.4613503309014\\
40	17.9755609135494\\
41	15.2126737323388\\
42	13.7971520325192\\
43	14.1857213487862\\
44	16.081876938523\\
45	15.3246791915802\\
46	15.14893795899\\
47	15.646090184012\\
48	17.4476661613164\\
49	16.7058244480964\\
50	17.5760318517221\\
};
\addlegendentry{$\eta_p$}

\addplot [color=green]
  table[row sep=crcr]{%
1	230.807937756524\\
2	174.254853667682\\
3	135.587095202912\\
4	113.195159409232\\
5	100.271346933154\\
6	100.159665297562\\
7	100.141937205873\\
8	100.141000025848\\
9	100.13607607536\\
10	100.132985105315\\
11	100.131774900758\\
12	100.130721127037\\
13	100.129887830569\\
14	100.129594041724\\
15	100.129045803062\\
16	100.128801803383\\
17	100.12657271606\\
18	100.125140264115\\
19	100.122184408114\\
20	100.122091718348\\
21	100.121724880743\\
22	100.121606461148\\
23	100.120523689046\\
24	100.120420656162\\
25	100.120379021347\\
26	100.120184011627\\
27	100.119285534991\\
28	100.119110621353\\
29	100.118854096229\\
30	100.118778435932\\
31	100.118721117106\\
32	100.118602383529\\
33	100.118154501398\\
34	100.117965041318\\
35	100.117264485255\\
36	100.117062225346\\
37	100.116741226032\\
38	100.116523707934\\
39	100.116361656985\\
40	100.116340546372\\
41	100.116226368608\\
42	100.11601599126\\
43	100.115692505212\\
44	100.115577137834\\
45	100.115521348377\\
46	100.115345396955\\
47	100.115289638314\\
48	100.115203624859\\
49	100.115011355288\\
50	100.114913317238\\
};
\addlegendentry{$\|\widehat T_Z\|$}

\addplot [color=green, dashed]
  table[row sep=crcr]{%
1	107.777016597459\\
2	107.777014753865\\
3	107.777012166043\\
4	107.777009847876\\
5	107.777007873043\\
6	107.777006084041\\
7	107.777004882915\\
8	107.777003533964\\
9	107.776999031069\\
10	107.776998181697\\
11	107.776994419055\\
12	107.776991162511\\
13	107.776986019641\\
14	107.776985644043\\
15	107.776983220794\\
16	107.77697977169\\
17	107.776979751741\\
18	107.776978730457\\
19	107.77696430155\\
20	107.7769639936\\
21	107.776952341181\\
22	107.776951788024\\
23	107.776939311227\\
24	107.776938994915\\
25	107.776935840285\\
26	107.776935769054\\
27	107.776923586755\\
28	107.776923234796\\
29	107.776921051401\\
30	107.776920516133\\
31	107.776914228841\\
32	107.776912858163\\
33	107.776910070969\\
34	107.776908993577\\
35	107.776905399664\\
36	107.776901063779\\
37	107.776900839166\\
38	107.776899511912\\
39	107.776894551159\\
40	107.77689443986\\
41	107.776889860577\\
42	107.776889518247\\
43	107.776889251044\\
44	107.776889055292\\
45	107.776883133845\\
46	107.776878777092\\
47	107.776869489792\\
48	107.776869235258\\
49	107.776868919298\\
50	107.776868067532\\
};
\addlegendentry{$\|\widehat T_W\|$}

\addplot [color=blue]
  table[row sep=crcr]{%
1	23.2938746666469\\
2	17.6481182342788\\
3	13.6635464136767\\
4	11.5214527310042\\
5	9.05683101272088\\
6	8.40048813364201\\
7	7.96343276328016\\
8	5.79333926273529\\
9	5.87221194386794\\
10	5.67686703640936\\
11	5.59867778879269\\
12	5.62944969149224\\
13	5.58839844803164\\
14	5.52877675825644\\
15	5.40950183473849\\
16	5.32030472721403\\
17	5.27998287320719\\
18	5.22528252472548\\
19	5.17242575003505\\
20	5.11880024587725\\
21	5.075045043237\\
22	5.02470666693318\\
23	4.94738608433083\\
24	4.8249577293484\\
25	4.7879022139963\\
26	4.6664238179076\\
27	4.5579817054869\\
28	4.45077820799392\\
29	4.41577148648997\\
30	4.33815929176842\\
31	4.32470249222876\\
32	4.19640610070407\\
33	4.13992394157828\\
34	4.11393582872202\\
35	4.05012362009059\\
36	4.00369334437812\\
37	3.97995119523163\\
38	3.89999235696657\\
39	3.8425392250639\\
40	3.7770515687245\\
41	3.76204008769297\\
42	3.65439391937404\\
43	3.562302229644\\
44	3.5406525847841\\
45	3.48257475357689\\
46	3.44015068253322\\
47	3.40193096164985\\
48	3.36378348547959\\
49	3.33095444490541\\
50	3.29635684376472\\
};
\addlegendentry{RSVD-CUR-error}

\addplot [color=blue, dashed]
  table[row sep=crcr]{%
1	213320.669554249\\
2	161297.029968249\\
3	128108.980741869\\
4	117556.16154981\\
5	106490.207495615\\
6	113141.314355575\\
7	124679.76931521\\
8	132755.487797698\\
9	133057.846081983\\
10	179613.058482285\\
11	179463.908845694\\
12	187450.154151197\\
13	187712.233328817\\
14	189844.72079038\\
15	189573.030531374\\
16	191070.035339867\\
17	237689.090918842\\
18	229647.271070982\\
19	233011.642565758\\
20	233303.192012668\\
21	236556.634032689\\
22	224171.939893922\\
23	223839.250829526\\
24	224922.426603714\\
25	228556.503117558\\
26	236692.986808391\\
27	247051.139885166\\
28	239366.480760127\\
29	247401.304446132\\
30	241836.48834098\\
31	240878.845894972\\
32	252014.463658265\\
33	252888.991613233\\
34	272396.14746606\\
35	277466.74203743\\
36	322069.086177496\\
37	301944.422743332\\
38	303208.749464866\\
39	311744.41032502\\
40	328181.140331544\\
41	330519.090139022\\
42	332045.238479883\\
43	315968.007264671\\
44	350646.25001155\\
45	349509.897451734\\
46	353502.781583047\\
47	369536.144233698\\
48	404093.512545202\\
49	398976.835235659\\
50	387183.519468177\\
};
\addlegendentry{RSVD-CUR-bound}

\addplot [color=black]
  table[row sep=crcr]{%
1	54.1657258751031\\
2	54.1657258751031\\
3	54.1657258751031\\
4	54.1657258751031\\
5	54.1657258751031\\
6	54.1657258751031\\
7	54.1657258751031\\
8	54.1657258751031\\
9	54.1657258751031\\
10	54.1657258751031\\
11	54.1657258751031\\
12	54.1657258751031\\
13	54.1657258751031\\
14	54.1657258751031\\
15	54.1657258751031\\
16	54.1657258751031\\
17	54.1657258751031\\
18	54.1657258751031\\
19	54.1657258751031\\
20	54.1657258751031\\
21	54.1657258751031\\
22	54.1657258751031\\
23	54.1657258751031\\
24	54.1657258751031\\
25	54.1657258751031\\
26	54.1657258751031\\
27	54.1657258751031\\
28	54.1657258751031\\
29	54.1657258751031\\
30	54.1657258751031\\
31	54.1657258751031\\
32	54.1657258751031\\
33	54.1657258751031\\
34	54.1657258751031\\
35	54.1657258751031\\
36	54.1657258751031\\
37	54.1657258751031\\
38	54.1657258751031\\
39	54.1657258751031\\
40	54.1657258751031\\
41	54.1657258751031\\
42	54.1657258751031\\
43	54.1657258751031\\
44	54.1657258751031\\
45	54.1657258751031\\
46	54.1657258751031\\
47	54.1657258751031\\
48	54.1657258751031\\
49	54.1657258751031\\
50	54.1657258751031\\
};
\addlegendentry{$\|A_E\|$}

\end{axis}
\end{tikzpicture}%
 \captionsetup{format=hang}
	\caption{Various quantities from \cref{theorem1} using the DEIM index selection scheme: error constants $\eta_p = \norm{(Q_{W}^TP)^{-1}}$ (red dashed) and $\eta_s = \norm{(S_A^TQ_Z)^{-1}}$ (red solid); multiplicative factors $\norm{\wh T_Z}$ (green solid) and $\norm{\wh T_W}$ (green dashed); an RSVD-CUR true error $\|A_E-(CMR)_{\sf rsvd-cur}\|$ of approximating $A_E$ in \cref{exp:1} (blue solid) and its upper bound (blue dashed).\label{fig:3}} 
\end{figure}

}
\end{experiment}
\begin{experiment}\label{exp:2} {\rm
In this experiment, we investigate the use of inexact Cholesky factors $\wh B$ and $\wh G$. In particular, we assume that the exact noise covariance matrices $B^T\!B$ and $G^T\!G$ are unknown, and we compare the performance of the proposed RSVD-CUR decomposition with that of a CUR factorization in reconstructing $A$ from $A_E$. To generate the inexact Cholesky factors $\wh B$ and $\wh G$, we multiply the off-diagonal elements of the exact Cholesky factors $B$ and $G$ by uniform random numbers from the interval $[0.9, 1.1]$. We maintain the experimental setup described in \cref{exp:1} using noise levels $\varepsilon=0.1, 0.2$, except that here we compute the RSVD of $(A_E, \wh B, \wh G)$ to obtain the input matrices for the RSVD-CUR decomposition. Our results, presented in \Cref{fig: 4a,fig: 4b}, suggest that the RSVD and RSVD-CUR factorization still provide good approximation results even when using inexact Cholesky factors. This finding may indicate that we may not necessarily need the exact noise covariance.

\begin{figure}[htb!]
 \centering
 \subfloat[$\varepsilon=0.1$.]{\label{fig: 4b}{
%
%
\definecolor{mycolor1}{rgb}{0.00000,0.44700,0.74100}%
\definecolor{mycolor2}{rgb}{0.85000,0.32500,0.09800}%
\definecolor{mycolor3}{rgb}{0.92900,0.69400,0.12500}%
\definecolor{mycolor4}{rgb}{0.49400,0.18400,0.55600}%
\definecolor{mycolor5}{rgb}{0.46600,0.67400,0.18800}%
\definecolor{mycolor6}{rgb}{0.30100,0.74500,0.93300}%
\begin{tikzpicture}

\begin{axis}[%
width=0.4\textwidth,
height=3.5cm,
at={(0\textwidth,0\textwidth)},
scale only axis,
xmin=1,
xmax=50,
ymin=0,
ymax=0.25,
xlabel=rank $k$,
ylabel={$\| A - \widetilde{A}_k \|\ /\ \| A \|$},
axis background/.style={fill=white},
legend style={legend columns=1, legend cell align=left, align=left, draw=white!15!black,nodes={scale=0.8, transform shape}}
]
\addplot [color=blue, mark=o,  mark size=1.5pt,mark options={solid, blue}]
  table[row sep=crcr]{%
1	0.61858570775142\\
2	0.373838738849578\\
3	0.284868183019796\\
4	0.240252708466364\\
5	0.223788548495257\\
6	0.180830845631842\\
7	0.17435864369073\\
8	0.173915626590947\\
9	0.10264077864004\\
10	0.0975663110692242\\
11	0.0741368517659358\\
12	0.0772429502278852\\
13	0.0791694376066318\\
14	0.0799782320223976\\
15	0.0816110517926237\\
16	0.0819699573469209\\
17	0.0826333481598634\\
18	0.0837088311980197\\
19	0.0857053542181337\\
20	0.0866623686696185\\
21	0.0868686943762482\\
22	0.087893762884346\\
23	0.0888878628666752\\
24	0.0890063195083511\\
25	0.0904263913751607\\
26	0.0912924899562383\\
27	0.0916473240047549\\
28	0.0924583576657456\\
29	0.0928093499486115\\
30	0.0930897764718062\\
31	0.0933437402377569\\
32	0.0934978340518904\\
33	0.0938496849577055\\
34	0.094064056176599\\
35	0.0943459284320266\\
36	0.0945869860887702\\
37	0.0947600786598649\\
38	0.094967739713799\\
39	0.0952177110197171\\
40	0.0954316820524645\\
41	0.0956612614285679\\
42	0.0958511322454847\\
43	0.0959942175248549\\
44	0.0961214971800699\\
45	0.0963974529827224\\
46	0.0965511483591493\\
47	0.0966307181249683\\
48	0.0967757628830771\\
49	0.0968322063356569\\
50	0.0969172055885621\\
};
\addlegendentry{DEIM-CUR}

\addplot [color=red, mark=asterisk,  mark size=1.5pt,mark options={solid, red}]
  table[row sep=crcr]{%
1	0.61858570775142\\
2	0.373847650940691\\
3	0.284875474509559\\
4	0.226185877155832\\
5	0.195692227722263\\
6	0.174570470986077\\
7	0.165379297798485\\
8	0.16476557716389\\
9	0.102702416209809\\
10	0.0862836646153813\\
11	0.0729743385491812\\
12	0.0737775108768682\\
13	0.0776743576444861\\
14	0.0803981572514816\\
15	0.0818340527696041\\
16	0.0830438307334446\\
17	0.0843053659407953\\
18	0.0858186131708653\\
19	0.0847361096856937\\
20	0.0875071653700941\\
21	0.088758846376341\\
22	0.0883073095324763\\
23	0.0890347280901511\\
24	0.0875306436998498\\
25	0.0889380582059765\\
26	0.089512627088823\\
27	0.0905532290281432\\
28	0.0906194560045435\\
29	0.0914589202387378\\
30	0.0905371572836109\\
31	0.0913686056307103\\
32	0.0923909759775609\\
33	0.0925454734035652\\
34	0.0925626309829367\\
35	0.0936213507798097\\
36	0.0942322590666475\\
37	0.0942503550123126\\
38	0.0944697125350367\\
39	0.0944293771970815\\
40	0.0944988063221427\\
41	0.0951631026780533\\
42	0.0955636346005061\\
43	0.0955538351430543\\
44	0.0962644399228854\\
45	0.0956435687506131\\
46	0.0959053435039445\\
47	0.0963293382799722\\
48	0.0964115992577201\\
49	0.0963394579757136\\
50	0.0963057164567655\\
};
\addlegendentry{QDEIM-CUR}

\addplot [color=mycolor2, mark=square, mark size=1.5pt, mark options={solid, mycolor2}]
  table[row sep=crcr]{%
1	0.61858570775142\\
2	0.373847650940691\\
3	0.284887597457364\\
4	0.240284746698447\\
5	0.223793971821218\\
6	0.174516604843976\\
7	0.165307096673973\\
8	0.164725944593043\\
9	0.102692835849739\\
10	0.0535319478303391\\
11	0.0534719034190731\\
12	0.0504516656608384\\
13	0.0500550495326531\\
14	0.0499995133897044\\
15	0.0475734013862294\\
16	0.0468066212497584\\
17	0.0460496381481802\\
18	0.0447271389113012\\
19	0.0465774163561713\\
20	0.0467241592003913\\
21	0.0487773727559313\\
22	0.0506388545834017\\
23	0.0510812467053694\\
24	0.052224523571123\\
25	0.0529594300288153\\
26	0.0553217492071842\\
27	0.0559132545100707\\
28	0.0575133418500634\\
29	0.0585712417243561\\
30	0.0608187076774207\\
31	0.0618991799569113\\
32	0.0628116062724637\\
33	0.0635253031905997\\
34	0.0647517990575684\\
35	0.0661176681832179\\
36	0.066875304029272\\
37	0.0682677889743648\\
38	0.068651125005725\\
39	0.070332436225517\\
40	0.0717723990626666\\
41	0.0729084046473922\\
42	0.074087961669079\\
43	0.0748924391427304\\
44	0.0755802401040303\\
45	0.0763607153791759\\
46	0.0769702487684608\\
47	0.0783778275324297\\
48	0.0791209191732683\\
49	0.0795990128952378\\
50	0.0806010467139267\\
};
\addlegendentry{DEIM-RSVD-CUR}

\addplot [color=mycolor4, mark=star,  mark size=1.5pt,mark options={solid, mycolor4}]
  table[row sep=crcr]{%
1	0.61858570775142\\
2	0.373847650940691\\
3	0.284887597457364\\
4	0.240284746698447\\
5	0.223793971821218\\
6	0.174516604843976\\
7	0.165330212162327\\
8	0.164704829854952\\
9	0.102640979374598\\
10	0.0534699664489024\\
11	0.0534166195164022\\
12	0.0479857527287828\\
13	0.0464339791091431\\
14	0.0469846216221341\\
15	0.045459868193408\\
16	0.0455085583722799\\
17	0.0451712219567233\\
18	0.0421627142858095\\
19	0.0417069364549168\\
20	0.0432284736310688\\
21	0.0444904727681299\\
22	0.045701693996672\\
23	0.0470728841379334\\
24	0.0497536277720677\\
25	0.0475335483948985\\
26	0.0530421818110763\\
27	0.0529368905738373\\
28	0.0530985576826258\\
29	0.0545744827250575\\
30	0.058226126388537\\
31	0.0568810584912283\\
32	0.0590333239505682\\
33	0.059419897535508\\
34	0.0607692578667013\\
35	0.0631508008582417\\
36	0.0634697740922599\\
37	0.0644189534808016\\
38	0.0661146555918232\\
39	0.0692969377437091\\
40	0.0698629538467231\\
41	0.0690200722976487\\
42	0.0709659835042458\\
43	0.0709101287926426\\
44	0.0731204963521562\\
45	0.0745944222082032\\
46	0.0761369944299772\\
47	0.0799847260700109\\
48	0.0789116195887686\\
49	0.0790618997295201\\
50	0.0801773085239873\\
};
\addlegendentry{QDEIM-RSVD-CUR}

\addplot [color=mycolor5, mark=diamond,  mark size=1.5pt,mark options={solid, mycolor5}]
  table[row sep=crcr]{%
1	0.618499601501808\\
2	0.373734892538762\\
3	0.283995544802594\\
4	0.224939278320009\\
5	0.190900814721668\\
6	0.171658846855349\\
7	0.157070497518413\\
8	0.140600841369687\\
9	0.101520914021384\\
10	0.055793661011709\\
11	0.0998400984829982\\
12	0.0998490805417751\\
13	0.0998740144385101\\
14	0.0998764624750204\\
15	0.0998928502737389\\
16	0.0998954123165885\\
17	0.0999302415255217\\
18	0.0999471768238345\\
19	0.0999544390790734\\
20	0.0999603394309056\\
21	0.0999643917887586\\
22	0.0999667622426858\\
23	0.0999688042743287\\
24	0.0999730555666229\\
25	0.0999787483686031\\
26	0.0999825415966006\\
27	0.0999847780853258\\
28	0.0999858931966525\\
29	0.0999873905802988\\
30	0.0999891957623766\\
31	0.099990329410315\\
32	0.099992012711888\\
33	0.099992435715872\\
34	0.0999937150081774\\
35	0.0999943176717474\\
36	0.099994690818476\\
37	0.0999952205638917\\
38	0.0999956648901051\\
39	0.0999963095171315\\
40	0.0999972620529281\\
41	0.0999979320471552\\
42	0.0999982113790842\\
43	0.099998193821655\\
44	0.0999984902281839\\
45	0.0999985377573702\\
46	0.0999986657826672\\
47	0.0999986707798501\\
48	0.0999987924609163\\
49	0.0999988272136873\\
50	0.0999990867385357\\
};
\addlegendentry{SVD}

\addplot [color=mycolor6, mark=+, mark size=1.5pt,mark options={solid, mycolor6}]
  table[row sep=crcr]{%
1	0.619867266860878\\
2	0.373870237448787\\
3	0.284622245624577\\
4	0.228357607699957\\
5	0.191794035940755\\
6	0.173377050794007\\
7	0.160349734716035\\
8	0.144106232192093\\
9	0.102008318167329\\
10	0.0500491698630396\\
11	0.0486737270262388\\
12	0.0440913259025569\\
13	0.0404898683060642\\
14	0.0377455240480731\\
15	0.0349335205223625\\
16	0.0335999471260295\\
17	0.0312251591008229\\
18	0.0311835877925997\\
19	0.0281721691345202\\
20	0.0268041515163745\\
21	0.0265159211827571\\
22	0.0246512265372866\\
23	0.0239137137206792\\
24	0.0239149293515401\\
25	0.0226117721117501\\
26	0.0222000041435325\\
27	0.0216459177316402\\
28	0.0216074239185029\\
29	0.0203615574045115\\
30	0.0192340675722644\\
31	0.0185017943857191\\
32	0.0184283690556957\\
33	0.017376248489079\\
34	0.0170077261458653\\
35	0.0168592027291692\\
36	0.0156141833601353\\
37	0.0156005571530469\\
38	0.015561482485848\\
39	0.0155542828779956\\
40	0.0153005487462799\\
41	0.0152669770658383\\
42	0.014099262877178\\
43	0.0131613762498589\\
44	0.0131128705219017\\
45	0.0125780365408205\\
46	0.0120379310621282\\
47	0.0112940600659963\\
48	0.0112876180904979\\
49	0.0101871467588145\\
50	0.00526921470439507\\
};
\addlegendentry{RSVD}

\end{axis}

\end{tikzpicture}
 \subfloat[$\varepsilon=0.2$.]{\label{fig: 4a}{
%
%
\definecolor{mycolor1}{rgb}{0.00000,0.44700,0.74100}%
\definecolor{mycolor2}{rgb}{0.85000,0.32500,0.09800}%
\definecolor{mycolor3}{rgb}{0.92900,0.69400,0.12500}%
\definecolor{mycolor4}{rgb}{0.49400,0.18400,0.55600}%
\definecolor{mycolor5}{rgb}{0.46600,0.67400,0.18800}%
\definecolor{mycolor6}{rgb}{0.30100,0.74500,0.93300}%
\begin{tikzpicture}

\begin{axis}[%
width=0.4\textwidth,
height=3.5cm,
at={(0\textwidth,0\textwidth)},
scale only axis,
xmin=1,
xmax=50,
ymin=0,
ymax=0.25,
xlabel=rank $k$,
axis background/.style={fill=white},
]
\addplot [color=blue, mark=o,  mark size=1.5pt,mark options={solid, blue}, forget plot]
  table[row sep=crcr]{%
1	0.618589639170852\\
2	0.37383795225893\\
3	0.284852787669925\\
4	0.238909243728072\\
5	0.223833607872855\\
6	0.203850677040055\\
7	0.18695656740094\\
8	0.182880149424912\\
9	0.181158163733663\\
10	0.160905415797211\\
11	0.16581108516302\\
12	0.170159193927934\\
13	0.172806002698074\\
14	0.173826396162659\\
15	0.176247051490059\\
16	0.178662427834773\\
17	0.179622936633376\\
18	0.181981889641854\\
19	0.182956360820265\\
20	0.184029940009212\\
21	0.185246068649812\\
22	0.186681084257055\\
23	0.187074851141511\\
24	0.187657297457112\\
25	0.188750142217667\\
26	0.18958197570928\\
27	0.190593745578657\\
28	0.19118173395958\\
29	0.192146110766377\\
30	0.192624074077385\\
31	0.193293722775025\\
32	0.193553082570238\\
33	0.193773953233855\\
34	0.193968877643802\\
35	0.194143346517406\\
36	0.194451237176449\\
37	0.194631481838725\\
38	0.194946349561951\\
39	0.195139361956872\\
40	0.195399690262316\\
41	0.195560330446947\\
42	0.195699402170255\\
43	0.19585860715172\\
44	0.196067904585662\\
45	0.196101381610148\\
46	0.196219934953358\\
47	0.196382360629493\\
48	0.196508980810189\\
49	0.196629560417201\\
50	0.196768685428789\\
};
\addplot [color=red, mark=asterisk, mark size=1.5pt, mark options={solid, red}, forget plot]
  table[row sep=crcr]{%
1	0.618589639170852\\
2	0.373861181336824\\
3	0.284870754356218\\
4	0.232414767665696\\
5	0.216050876297982\\
6	0.196766170838173\\
7	0.176530924723648\\
8	0.172266971547772\\
9	0.161721688031019\\
10	0.157123873247885\\
11	0.163436172579086\\
12	0.16818960851549\\
13	0.168144985972969\\
14	0.173415411516413\\
15	0.172047233646717\\
16	0.173820412335712\\
17	0.182551206466406\\
18	0.178849476829185\\
19	0.182323400715927\\
20	0.182404434512478\\
21	0.182165449086699\\
22	0.186376589986489\\
23	0.18394048874869\\
24	0.184986123621969\\
25	0.187915088136146\\
26	0.186521334266127\\
27	0.1891737224253\\
28	0.189173196422401\\
29	0.190543131250562\\
30	0.191195937802008\\
31	0.189885379519879\\
32	0.191279175708339\\
33	0.19145002396696\\
34	0.192097916623212\\
35	0.191866048233116\\
36	0.192576355119453\\
37	0.192633571878952\\
38	0.193049538622912\\
39	0.193630055988868\\
40	0.194096518172343\\
41	0.194317293085872\\
42	0.19449389659018\\
43	0.194287874822646\\
44	0.194353521446331\\
45	0.194836622444348\\
46	0.195364264692426\\
47	0.195494000967541\\
48	0.195652488499528\\
49	0.196012696211215\\
50	0.196112629289978\\
};
\addplot [color=mycolor2, mark=square,  mark size=1.5pt,mark options={solid, mycolor2}, forget plot]
  table[row sep=crcr]{%
1	0.618589639170852\\
2	0.373861181336824\\
3	0.284912311088637\\
4	0.240294083500818\\
5	0.223717298395481\\
6	0.174680043850606\\
7	0.165804273034759\\
8	0.165237447923286\\
9	0.102959887829783\\
10	0.0729113173051374\\
11	0.0739419558762537\\
12	0.0741792041580303\\
13	0.0743951251341259\\
14	0.0783777701896925\\
15	0.0830422109601645\\
16	0.0898304008181092\\
17	0.0945187422452946\\
18	0.101507355447009\\
19	0.107085852799032\\
20	0.109676230677627\\
21	0.118410508327726\\
22	0.128660187548646\\
23	0.132271813710529\\
24	0.134288969112484\\
25	0.137030707751213\\
26	0.142774488961039\\
27	0.143863092934157\\
28	0.147301519921813\\
29	0.149424238838213\\
30	0.153731445322811\\
31	0.155100141178444\\
32	0.156190517777644\\
33	0.157578973410137\\
34	0.158916399735448\\
35	0.160732056566942\\
36	0.161643563940769\\
37	0.164364368926586\\
38	0.165661974791265\\
39	0.167771660648864\\
40	0.169517678162322\\
41	0.170968649015173\\
42	0.172023094017198\\
43	0.172976193337988\\
44	0.174062611517435\\
45	0.17473681026671\\
46	0.175746889455155\\
47	0.177349391842904\\
48	0.177916804984633\\
49	0.180138803452433\\
50	0.180768776588813\\
};

\addplot [color=mycolor4, mark=star, mark size=1.5pt, mark options={solid, mycolor4}, forget plot]
  table[row sep=crcr]{%
1	0.618589639170852\\
2	0.373861181336824\\
3	0.284912311088637\\
4	0.240294083500818\\
5	0.223717298395481\\
6	0.174680043850606\\
7	0.165868483707827\\
8	0.165182385899257\\
9	0.102882083179722\\
10	0.0730707204710473\\
11	0.0743286314499165\\
12	0.0738940938190275\\
13	0.0741886361737198\\
14	0.0774201978178802\\
15	0.0793079720597009\\
16	0.0870736067822675\\
17	0.0925762371178167\\
18	0.0969376864446854\\
19	0.0969139760099855\\
20	0.103171250702647\\
21	0.108215555421972\\
22	0.115393664244228\\
23	0.123152864389872\\
24	0.130232820615595\\
25	0.128262781207023\\
26	0.13716945779864\\
27	0.137710359389749\\
28	0.137466719613959\\
29	0.14328893753961\\
30	0.147188768719166\\
31	0.145786979398744\\
32	0.150318623856307\\
33	0.152954122122787\\
34	0.154644997499959\\
35	0.156096715718591\\
36	0.158638818798317\\
37	0.156893080190095\\
38	0.160497726760096\\
39	0.163151943727467\\
40	0.165254941687993\\
41	0.166681666700965\\
42	0.167475403782193\\
43	0.168491731527563\\
44	0.171997781454746\\
45	0.17419403532988\\
46	0.174695307488561\\
47	0.179379186256456\\
48	0.181794797118334\\
49	0.179052467535998\\
50	0.180857850933241\\
};
\addplot [color=mycolor5, mark=diamond, mark size=1.5pt, mark options={solid, mycolor5}, forget plot]
  table[row sep=crcr]{%
1	0.618499966109694\\
2	0.373737886096262\\
3	0.284012738863235\\
4	0.225003687985915\\
5	0.191163878994616\\
6	0.178972037271361\\
7	0.195640730669385\\
8	0.197986767016056\\
9	0.199237817550365\\
10	0.199623249209613\\
11	0.199748592660204\\
12	0.199904905073718\\
13	0.199937598507671\\
14	0.199949049801451\\
15	0.19998784947183\\
16	0.199988733273918\\
17	0.199988768002029\\
18	0.199988927560884\\
19	0.199989259455013\\
20	0.199989792020422\\
21	0.199989683788683\\
22	0.199991588345755\\
23	0.19999310421258\\
24	0.199995421820543\\
25	0.199995781270173\\
26	0.199996205806492\\
27	0.199996418229242\\
28	0.199996382943967\\
29	0.199997002298205\\
30	0.199996703110982\\
31	0.199997448671347\\
32	0.199997298891791\\
33	0.199998213970143\\
34	0.199998320832974\\
35	0.199998308081133\\
36	0.199998412320141\\
37	0.199998551138417\\
38	0.199998814643202\\
39	0.19999881940616\\
40	0.199999127956214\\
41	0.199999151991303\\
42	0.199999224981078\\
43	0.199999360307343\\
44	0.199999305350886\\
45	0.199999469600652\\
46	0.199999529349299\\
47	0.199999546016866\\
48	0.199999557088954\\
49	0.19999960018044\\
50	0.199999621385493\\
};
\addplot [color=mycolor6, mark=+,  mark size=1.5pt,mark options={solid, mycolor6}, forget plot]
  table[row sep=crcr]{%
1	0.619867485732804\\
2	0.373870280292079\\
3	0.284622516613684\\
4	0.228359166019914\\
5	0.19179458789328\\
6	0.173377077348029\\
7	0.16034823732783\\
8	0.144108919682869\\
9	0.102008858314671\\
10	0.0500590101577356\\
11	0.0486758892548173\\
12	0.044115248732765\\
13	0.0405264144354487\\
14	0.0377687678080193\\
15	0.0349723656195324\\
16	0.0336095623124302\\
17	0.0313052715125948\\
18	0.0312742414455334\\
19	0.0281926915764759\\
20	0.0268317860845863\\
21	0.0265602896246356\\
22	0.0246722904338821\\
23	0.0240861507816072\\
24	0.0240933208910984\\
25	0.0227544983804929\\
26	0.0223330270610141\\
27	0.0216726994992138\\
28	0.0216576043940411\\
29	0.020405555239077\\
30	0.0193120361398142\\
31	0.0185189137452649\\
32	0.0184431360526756\\
33	0.0173965651053648\\
34	0.0170381942021612\\
35	0.0168735061085491\\
36	0.0156688065875761\\
37	0.0156678068918015\\
38	0.0156424441711125\\
39	0.0156347560773416\\
40	0.0154910379751873\\
41	0.0154455640203186\\
42	0.0142404484588016\\
43	0.0133519416133078\\
44	0.0133256459858833\\
45	0.0129075544305348\\
46	0.0125036295515441\\
47	0.0123460284240168\\
48	0.0124332392440459\\
49	0.0118301001214956\\
50	0.0114333674469342\\
};
\end{axis}

\end{tikzpicture}
	
	\captionsetup{format=hang}
	\caption{The approximation quality of RSVD-CUR factorizations using inexact Cholesky factors of the noise covariance matrices compared with CUR decompositions in recovering a sparse, nonnegative matrix $A$ perturbed with nonwhite noise. The average relative errors $\norm{A-\widetilde {A}_k}\,/\,\norm{A}$ (on the vertical axis) as a function of rank $k$ (on the horizontal axis) for $\varepsilon=0.1$, $0.2$, respectively.\label{fig: 4}}
\end{figure}
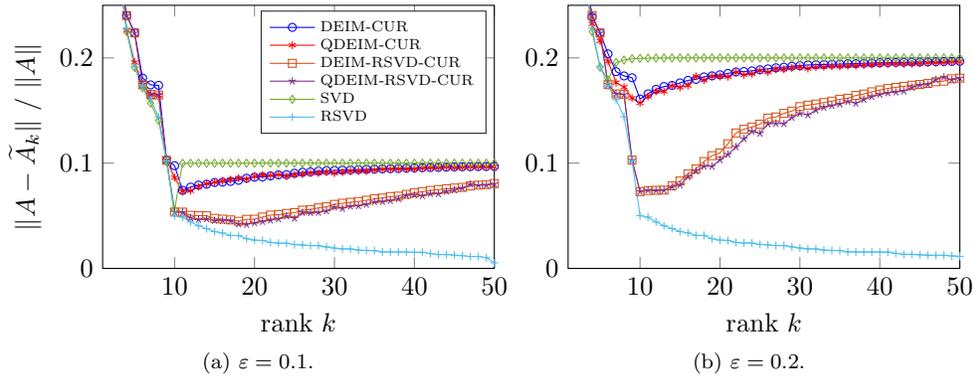
}
\end{experiment}

\begin{experiment}\label{exp:3} {\rm This experiment demonstrates the effectiveness of an RSVD-ID (as defined in \eqref{RSVD:ID}) in discovering the underlying class structure that two views of a data set share. We demonstrate that using an RSVD-ID as a feature selection technique in multi-view classification problems can improve classification accuracy. Typically, multi-view classification problems aim to improve classification accuracy by integrating information from different views into a unified representation. Two traditional approaches for dimension reduction in such problems are concatenation and separation. The concatenation strategy merges different views into a new feature space and applies traditional feature selection algorithms such as an interpolative decomposition on the merged set, while the separation strategy performs feature selection separately on each view. The concatenation approach may overlook the unique statistical properties of each feature set, while the separation strategy may miss important relationships between views. Given that multiple views of data can offer complementary information, it is reasonable to develop a feature selection algorithm that leverages all views and exploits their relationships.

Consider the two views/feature sets as matrices $B$ and $G$. We are primarily interested in the key columns of $B$ and $G$ rather than their explicit CUR factorization. An RSVD-ID may serve as an unsupervised feature selection method for two-view data sets, leveraging the correlation between the views. While we are interested in a subset of the columns of $B$ and $G$, the problem involves three matrices and requires the use of RSVD-ID. Specifically, we use $A:= B^TG$ as the cross-correlation between the two views, $B$ as View1, and $G$ as View2. We compare the classification test error rate of the QDEIM-type RSVD-ID scheme with that of the QDEIM-type ID algorithm. (Recall the relationship between {\sf cca}$(B, G)$ and the RSVD of $(B^TG, B^T, G)$ as discussed in \cref{sec: RSVD}.)  We generate several reduced feature sets and compare their classification performance as follows:
\begin{enumerate}[label=(\roman*)]
 \item Two reduced feature sets are created by applying the QDEIM-ID procedure on each view separately, i.e., the separation strategy. We label the reduced feature sets as ID-View1 and ID-View2.
 
 \item Another two reduced feature sets are created by performing QDEIM-type RSVD-ID on the two views, i.e., RSVD-ID of $(A, B^T, G)$. We label them RSVD-ID-View1 and RSVD-ID-View2, which are the RSVD-ID selected features of views 1 and 2, respectively.
 
 \item The feature set labeled Fused-ID is a concatenation of the two reduced feature sets from (i), i.e., [ID-View1, ID-View2].
 
 \item Concat-ID is another feature set formed by applying the QDEIM-ID scheme on the column concatenation of both views. Note that here, the concatenation of the views is done before the dimension reduction is performed, i.e., the concatenation strategy.
 
 \item Finally, we concatenate the two reduced feature sets from (ii) to get Fused-RSVD-ID, i.e., [RSVD-ID-View1, RSVD-ID-View2].
\end{enumerate}

To ensure a fair comparison, we present the results of the single views (i) and (ii) in one table and the feature fusion results (iii), (iv), and (v) in a separate table. This allows us to investigate the impact of incorporating complementary information from all views on the classification performance of each feature set and to determine which method yields the best results.

To demonstrate the effectiveness of our approach, we use the handwritten digits data set from the UCI repository, which contains features of handwritten numerals (0--9) extracted from Dutch utility maps. The data set consists of 2000 digits, with 200 instances for each of the ten classes. We extract three types of feature sets: Fourier descriptors, Karhunen--Loève coefficients, and image vectors. The Fourier set contains 76 two-dimensional shape descriptors, the Karhunen--Loève feature set consists of 64 features, and the `pixel' feature set was obtained by dividing the image of $30\times 48$ pixels into 240 tiles of $2\times 3$ pixels. We combine these three feature sets to form three experiments of a two-view classification. In the first experiment, we use the Fourier coefficients of the character shapes ({\sf fou}) and the Karhunen--Loève coefficients ({\sf kar}) as view-1 and view-2, respectively. The second experiment uses the pixel averages in $2\times 3$ windows ({\sf pix}) as the first view and the Fourier coefficients of the character shapes ({\sf fou}) as the second view. Finally, the third experiment takes the pixel averages in $2\times 3$ windows ({\sf pix}) as the first view and the Karhunen--Loève coefficients ({\sf kar}) as the second view. \cref{MVDs} summarizes the basic traits of the various data sets. We normalize all the data sets to have a zero center and a standard deviation of one. 

\begin{table}[htb!]
\centering
\caption{Summary characteristics of multi-view data sets used in the experiments.}\label{MVDs}\vspace{-4mm}
{\scriptsize \begin{tabular}{lccc} \\ \hline \rule{0pt}{2.3ex}%
{\bf Data set} & {\bf Samples} & {\bf View 1} $(B)$ & {\bf View 2} $(G)$ \\ \hline \rule{0pt}{2.3ex}%
Digits ({\sf fou vs.~kar}) & 2000 & \phantom{1}76 & \phantom{1}64 \\
Digits ({\sf pix vs.~fou}) & 2000 & 240 & \phantom{1}76 \\
Digits ({\sf pix vs.~kar}) & 2000 & 240 & \phantom{1}64 \\ \hline 
\end{tabular}}
\end{table}

In each experiment, we randomly split the normalized data into 75\% training and 25\% testing data. For the randomization of the experiments, we perform 20 cases using different random seeds. \cref{fig:5,fig:6} reports the average classification test error rate of the default $k$-nearest neighbor ($k$-NN) classifier in MATLAB for varying reduced dimensions.
\begin{table}[htb!]
\centering
\caption{The average classification test error rate over 20 different random train-test splits of the `pixel', Fourier descriptors and Karhunen--Loève feature sets using QDEIM-ID and QDEIM-RSVD-ID as a dimension reduction method for a k-nearest neighbor classifier in \cref{exp:3}.}\label{fig:5}\vspace{-4mm}
{\scriptsize \begin{tabular}{lccccc} \\ \hline \rule{0pt}{2.3ex}%
Data/Method & Rank-$k$ & ID-View 1 & RSVD-ID-View1 & ID-View2 & RSVD-ID-View2 \\ \hline \rule{0pt}{2.3ex}%
\multirow{2}{*}{$B$={\sf pix} vs. $G$={\sf fou}} & 20 & 0.15 & \textbf{0.10}  & 0.33  & \textbf{0.19} \\
  & 30 & 0.10 & \textbf{0.07}  & 0.28  & \textbf{0.19} \\
  \\[-1.5mm]
\multirow{2}{*}{$B$={\sf fou} vs. $G$={\sf kar}} & 20 & 0.33 & \textbf{0.18}  & 0.17  & \textbf{0.07} \\
 & 30 & 0.28 & \textbf{0.19}  & 0.13  & \textbf{0.06} \\
 \\[-1.5mm]
$B$={\sf pix} vs. $G$={\sf kar} & 20 & 0.15 & \textbf{0.08}  & 0.17  & \textbf{0.04}  \\ & 30 & 0.10 & \textbf{0.06}  & 0.14  & \textbf{0.04} \\ \hline 
\end{tabular}}
\end{table}

\begin{table}[htb!]
\centering
\caption{The average classification test error rate over 20 different random train-test splits of fused feature sets using QDEIM-ID and QDEIM-RSVD-ID as a dimension reduction method for a k-nearest neighbor classifier in \cref{exp:3}.}\label{fig:6}\vspace{-4mm}
{\scriptsize \begin{tabular}{lccccc} \\ \hline \rule{0pt}{2.3ex}%
Data/Method & Rank-$k$ & Fused-ID & Concat-ID & Fused-RSVD-ID \\ \hline \rule{0pt}{2.3ex}%

\multirow{2}{*}{$B$={\sf pix} vs. $G$={\sf fou}} & 20 & 0.12  & 0.13 & \textbf{0.06}  \\
& 30 & 0.09 &  0.11 & \textbf{0.04} \\
\\[-1.5mm]
\multirow{2}{*}{$B$={\sf fou} vs. $G$={\sf kar}}  & 20 & 0.14  & 0.15 & \textbf{0.03} \\
& 30 & 0.10  & 0.13 & \textbf{0.02} \\
\\[-1.5mm]
$B$={\sf pix} vs. $G$={\sf kar} & 20 & 0.10  & \textbf{0.05} & 0.06  \\
& 30 & 0.07  & \textbf{0.04} & \textbf{0.04} \\ \hline 
\end{tabular}}
\end{table}
}
 From the results, we observe that a QDEIM-RSVD-ID method consistently performs better than a QDEIM-ID scheme. In particular, from the classification results using single views, the QDEIM-RSVD-ID significantly improves the worse QDEIM-ID single view results, as seen in columns 3 and 4 of \cref{fig:5}.
We notice that using information from multiple views indeed reduces the classification test error rate. Furthermore, in \cref{fig:6}, we observe that feature fusion from a QDEIM-type RSVD-ID approximation usually gives the least test error rate compared with the two other approaches involving the QDEIM-ID scheme, i.e., method (iii) and (iv). 

\end{experiment}

\begin{experiment}\label{exp:5}{\rm In certain applications, selecting the ``best" feature subset is not enough; cost considerations associated with those features also need to be taken into account. For instance, in medical diagnosis, medical tests incur a cost and risk, whereas symptoms observed by patients or medical practitioners are usually cost-free. Thus, reducing monetary costs and sparing patients from unpleasant or dangerous clinical tests (which can be quantified as costly variables) is essential. In image analysis, feature acquisition processes' time and space complexities generally constitute the computational cost of features. As such, reducing this cost by selecting only relevant and ideally ``inexpensive" variables is a typical modeler's goal.

In this experiment, we evaluate the efficacy of the CUR, the GCUR, and the RSVD-CUR methods in selecting relevant features that enhance prediction accuracy while keeping feature acquisition costs low in the presence of correlated noise. We demonstrate how RSVD-CUR factorization can be used in this context using the Thyroid disease data set from the UCI repository. The problem is to determine whether a patient referred to the clinic has hypothyroidism. The data set comprises three classes: normal (not hypothyroid), hyperfunction, and subnormal functioning. Given that 92\% of the patients are not hyperthyroid, a good classifier must have an accuracy significantly higher than 92\%. 

This 21-dimensional data set has a separate training and testing set. The training set consists of 3772 samples and the testing set consists of 3428 instances. The data set comes with an intrinsic cost associated with 20 input features, which we used to construct a diagonal matrix $G\in\R^{20\times20}$. We, therefore, dropped the feature that does not have an associated cost. \cref{thyroidattr} show the Thyroid data set's attributes. For our purpose, we add a small correlated noise perturbation to the normalized training data set, e.g., $\varepsilon=\norm{A_E-A}\,/\,\norm{A}=0.001$, where matrix $A$ represents the original 20-dimensional training data set. (Note that we only perturb the training data set.) We assume that the lower triangular matrix $B\in \R^{3772 \times 3772}$ is the Cholesky factor of a symmetric positive definite matrix with first-order autoregressive structure \eqref{cov:AR} (with diagonal entries 1 and the off-diagonal entries related by a multiplicative factor of 0.99). So the perturbation matrix $E=B\widetilde E$, where $\widetilde E\in\R^{3772\times 20}$ is a random Gaussian matrix. 
\begin{table}[htb!]
\centering
\caption{Thyroid disease data features and their associated costs.}\label{thyroidattr}\vspace{-4mm}
{\scriptsize \begin{tabular}{lcllr} \\ \hline \rule{0pt}{2.3ex}%
Feature & Cost &  & Feature & Cost  \\ \hline \rule{0pt}{2.3ex}%
age & 1.00 &  & query\_hyperthyroid & 1.00  \\
sex & 1.00 &  & lithium & 1.00  \\
on\_thyroxine  & 1.00 &  & goitre & 1.00  \\
query\_on\_thyroxine & 1.00 &  & tumor  & 1.00  \\
on\_antithyroid\_medication & 1.00 &  & hypopituitary & 1.00  \\
sick  & 1.00 &  & psych  & 1.00  \\
pregnant  & 1.00 &  & TSH & 22.78 \\
thyroid\_surgery & 1.00 &  & T3 & 11.41 \\
I131\_treatment & 1.00 &  & TT4 & 14.51 \\
query\_hypothyroid & 1.00 &  & T4U & 11.41  \\ \hline 
\end{tabular}}
\end{table}

 We evaluate the performance of the three algorithms based on the cost of features selected and their classification test error rates. The DEIM index selection procedure is used in this experiment. The standard CUR decomposition selects ten columns of $A_E$ without considering the noise filter $B$ and the cost matrix $G$. The GCUR method selects a subset of ten columns of $A_E$ relative to $G$ but does not consider the noise. The RSVD-CUR method selects ten columns of $A_E$ by incorporating all available prior information from $(A_E, B, G)$. To ensure the validity of the results, we perform ten cases using different random seeds. \cref{thyroidresults} presents the average total cost of the selected features and the average classification test error rate, where we use the default $k$-nearest neighbor ($k$-NN) classifier in MATLAB.
\begin{table}[htb!]
\centering
\caption{Average classification test error rate and total cost of selected variables using data set in \cref{exp:5}. The $k$-NN model is trained using the perturbed training data set.}\label{thyroidresults}\vspace{-4mm}
{\scriptsize \begin{tabular}{lccll} \\ \hline \rule{0pt}{2.3ex}%
Method/Criteria & \multicolumn{2}{c}{Error rate} &  & Cost  \\& $\varepsilon=10^{-4}$ & $\varepsilon=10^{-3}$ & &\\\hline \rule{0pt}{2.3ex}%
RSVD-CUR & 0.07 & 0.07 & & 10  \\
GCUR & 0.09 & 0.26 & & 10  \\
CUR & 0.11 & 0.11 & & 23.51 \\
All features & 0.07 & 0.07 & & 76.11  \\ \hline 
\end{tabular}}
\end{table}
The results shown in \cref{thyroidresults} indicate that methods incorporating cost information (i.e., GCUR and RSVD-CUR) lead to lower total costs of features. The features selected by the RSVD-CUR result in the lowest classification error rate, possibly because the RSVD-CUR considers the noise structure during feature selection. When $\varepsilon=10^{-4}$, the average classification error rate of the RSVD-CUR is approximately 28\% and 57\% lower than that of the GCUR and the CUR, respectively. Furthermore, the error rates of the RSVD-CUR and the CUR are less sensitive to perturbation levels compared to the GCUR (the error rate of the GCUR-selected features increases drastically from 0.09 to 0.26 as the noise level increases from 0.0001 to 0.001). Notably, the error rate of using the full 20-dimensional feature set is similar to that of the RSVD-CUR's 10-dimensional feature set with lower cost. When $\varepsilon=10^{-3}$, both the RSVD-CUR and the GCUR select the ``pregnant" feature (with lower cost) as the most important, while the CUR selects ``TT4" (with higher cost) as the most important, resulting in higher feature cost compared to the GCUR and the RSVD-CUR.
}
\end{experiment}

\textbf{General Gauss-Markov model with constraints.}
We briefly describe another possible application of the RSVD-CUR factorization here. 

The RSVD-CUR decomposition may be used as a subset selection procedure in the general Gauss-Markov linear models with constraints problem \eqref{eq:guass-markov}. ``This problem formulation admits ill-conditioned or rank-deficient $B\in\R^{m\times \ell}$ and $G\in\R^{d\times n}$ (usually with $d\le n$) matrices" \cite{de1991restricted}. The matrix $B$ may be considered a noise filter and $G$ may represent prior information about the unknown components of $\bx$ or may reflect the fact that certain components of $\bx$ are more important or less costly than others \cite{de1991restricted}.  Minimizing $\norm{\by}^2 +\norm{\bff}^2$ reflects that the goal is to explain as much in terms of the columns of $A$ (i.e., minimize $\norm{\by}^2$), taking into consideration the prior information on the structure of the noise as well as the preference of the modeler to use more of one predictor than others in explaining the phenomenon \cite{de1991restricted}. 
It is easy to see that the problem has a solution if the linear system 
\[\mtxa{cc}{A&B\\G&0}\mtxa{c}{\bx\\\by}=\mtxa{c}{\bb\\\bff}\]
is consistent.

In many applications, it is desirable to reduce the number of variables that are to be considered or measured in the future. As a result, it would be appropriate to use a variable subset selection method that incorporates all available prior information (i.e., $B$ and $G$). Since this problem \eqref{eq:guass-markov} involves three matrices, the RSVD-CUR is a suitable procedure for variable subset selection. One may argue that a CUR decomposition of $B^{-1}AG^{-1}$  may be used if $B$ and $G$ are nonsingular. However, since the above problem admits an ill-conditioned or rank-deficient $B$, such a formulation may not always be valid. Suppose we want to select $k$ columns of $A$ and the corresponding columns of $G$, the above linear system reduces to 
\[\mtxa{cc}{A_k&B\\G_k&0}\mtxa{c}{\bx_k\\\by}=\mtxa{c}{\wh \bb\\ \wh \bff},\]
where $A_k=A(:,\bp)$ and $G_k=G(:,\bp)$. The index vector $\bp$ is obtained by applying \cref{algo: RSVD-CUR-DEIM} to $(A, B, G)$.  
\begin{example}\label{ex:5.5}{\rm
The following examples are adapted from \cite{cox1999row}.
We give results for three problems with $m = \ell = 1000$ and $n=d = 100$. We denote by {\sf randn} a matrix or vector from the standard normal distribution and by ${\sf randsvd}(\kappa)$ a random matrix with spectral norm condition number $\kappa$ and geometrically distributed singular values; generated by the routine {\sf randsvd} in Matlab's {\sf gallery}. We consider problems where either one of the matrices is ill conditioned. For all problems, we take $\bb={\sf randn}$ and $\bff={\sf randn}$. \cref{markovproblem} reports the average relative errors of 100 test cases for each problem using the original and reduced system. We compute the errors as $\norm{(\bb, \bff)-(\wh \bb, \wh \bff)}$. In this RSVD-CUR type approach for a Gauss--Markov application, typical behavior of slowly decaying decomposition error is observed.

\begin{table}[htb!]
\centering
\caption{Average relative errors of the original and reduced system for varying $k$  values for the various problems of \cref{ex:5.5}.} \label{markovproblem}\vspace{-4mm}
{\scriptsize \begin{tabular}{lll|ccc}
\hline
\multicolumn{3}{c|}{Problem \  $\backslash$\ $k$} & 10 & 20 & 30 \\
$A$ & $B$ & $G$ \\ \hline \rule{0pt}{2.7ex}%
{\sf randsvd}(10) & {\sf randsvd}($10^4$) & {\sf randn} & 0.29                      & 0.27                  & 0.25 \\
{\sf randsvd}($10^6$) &  {\sf randsvd}(10)  & {\sf randn} & 0.29                      & 0.27                  & 0.25  \\
{\sf randsvd}($10^4$) & {\sf randsvd}($10^4$) & {\sf randsvd}(10) &0.28                      & 0.27                  & 0.25 \\
{\sf randn} & {\sf randn} & {\sf randn} & 0.29                      & 0.27                  & 0.25 \\ \hline
\end{tabular}}
\end{table}

}
\end{example}

\section{Conclusions}\label{sec: con}
We have proposed a new low-rank matrix decomposition, referred to as RSVD-CUR factorization, which extends the CUR decomposition to matrix triplets $(A, B, G)$. The construction of the $C$ and $R$ factors is performed using the DEIM or QDEIM index selection procedure, although other selection methods such as a maximum volume algorithm \cite{Goreinov2010} may be used instead. We have discussed the relationship between this DEIM type RSVD-CUR and a DEIM type CUR or GCUR for nonsingular $B$ and $G$. When $B=G=I$, the RSVD-CUR decomposition of $A$ coincides with a CUR decomposition of $A$. Additionally, when $B=I$, the DEIM type RSVD-CUR of $(A, B, G)$ corresponds to a DEIM type GCUR of $(A, G)$, and similarly for the transpose of $(A, B)$ when $G=I$. The RSVD-CUR factorization can be applied to feature fusion and feature subset selection in multi-view classification and multi-label classification problems. In data perturbation problems, the RSVD-CUR approximation can provide more accurate results than a CUR factorization when reconstructing a low-rank matrix from a correlated noise-perturbed data matrix. The RSVD-CUR factorization can also be used as a subset selection technique in generalized Gauss-Markov problems with constraints. The experiments in \cref{sec: exper} demonstrate the effectiveness of the RSVD-CUR factorization in these applications.

\section*{Acknowledgment}
The authors thank Ian Zwaan for several helpful discussions on algorithms for computing a restricted singular value decomposition. The authors would also like to express special thanks to the referees and editor, whose helpful expert suggestions led to significant improvements in the content and presentation of the paper.

\end{document}